\newcommand{\RR}{\mathbb R}
\newcommand{\TT}{\mathbb T}
\newcommand{\pat}{\partial_t}
\newcommand{\pax}{\partial_x}
\newcommand{\jeps}{\mathcal{J}_\epsilon*}
\newcommand{\rh}{\ensuremath{ \langle\rho\rangle }}
\newcommand{\reps}{\rho_\epsilon}
\newcommand{\veps}{v_\epsilon}
\newcommand{\rhh}{\ensuremath{ \langle\rho_0\rangle }}
\newcounter{comentcount}
\newcounter{teocount}
\newtheorem{lem}{Lemma}
\newtheorem{prop}{Proposition}
\newtheorem{teo}[teocount]{Theorem}  
\newtheorem{defi}{Definition}
\newenvironment{remark}
{\stepcounter{comentcount} {\bf \tt Remark} {\bf\tt\arabic{comentcount}} }{ }
\title{An aggregation equation with a nonlocal flux}
\author{Rafael Granero-Belinch\'on$^{\mbox{{\footnotesize 1}}}$ and Rafael Orive Illera $^{\mbox{{\footnotesize 2}}}$}
\begin{document}

\maketitle 
\footnotetext[1]{Email: \texttt{rgranero@math.ucdavis.edu}\\
Department of Mathematics,\\
University of California, Davis,\\
CA 95616, USA}

\footnotetext[2]{Email: \texttt{rafael.orive@icmat.es}\\
Universidad Aut\'onoma de Madrid\\
Instituto de Ciencias Matem\'aticas (CSIC-UAM-UC3M-UCM)\\
C/Nicol\'as Cabrera, 13-15,
Campus de Cantoblanco,
28049 - Madrid (Spain)}

\vspace{0.3cm}

\begin{abstract}
In this paper we study an aggregation equation with a general nonlocal flux. We study the local well-posedness and some conditions ensuring global existence. We are also interested in the differences arising when the nonlinearity in the flux changes. Thus, we perform some numerics corresponding to different convexities for the nonlinearity in the equation.
\end{abstract}

\vspace{0.3cm}

\textbf{Keywords}: Patlak-Keller-Segel model, Well-posedness, Blow-up, Simulation, Aggregation.

\textbf{Acknowledgments}: The authors are supported by the Grants MTM2011-26696 and SEV-2011-0087 from Ministerio de Ciencia e Innovaci\'on (MICINN).

\tableofcontents

\section{Introduction}

In this paper we study several types of nonlinear and nonlocal aggregation models with nonlinear diffusion and self-attraction coming from the Poisson equation posed in a periodical setting, i.e., the spatial domain is $\TT=[-\pi,\pi]$. In particular we are interested in the differences arising when the nonlinearity in the diffusion changes. 

Let us consider $\beta(x)$ a smooth, positive function in a domain containing $\RR^+$. The problem reads
\begin{equation}\label{eq2}
\left\{
\begin{array}{l}
\pat\rho=\pax\left(-\beta(\rho)H\rho+\rho\pax v\right),\qquad x\in\TT, t>0,\\
\pax^2 v=\rho-\rh,
\end{array}
\right.
\end{equation}
where $H\rho$ denotes the periodic Hilbert Transform 
$$
H\rho(x)=\frac{1}{2\pi}\text{P.V.}\int_\TT \frac{\rho(y)}{\tan\left(\frac{x-y}{2}\right)}dy,
$$
and 
$$
\rh=\frac{1}{2\pi}\int_\TT\rho(x)dx.
$$
In our favourite units, the scalar $v$ is the gravitational potential. Clearly, we need to attach an smooth initial data, $\rho_0$, that we will take non-negative. 

In (\ref{eq2}) we generalize  the classical Smoluchowski-Poisson or Patlak-Keller-Segel system  considering a quasilinear and critical nonlocal diffusion (\emph{i.e.}, the case where the diffusion is given by $\sqrt{-\Delta}$).  
Up to the best of our knowledge, this equation  has not been studied before. However, the one-dimensional case with linear, nonlocal diffusion has been treated in \cite{AGM, bournaveas2010one}, while similar equations with linear and quasilinear, local diffusions have been considered in many works (see for instance \cite{BN, BNB, blanchet2011parabolic, blanchet2009critical, BCM, 
blanchet2010functional, Dolbeault4, Dolbeault3, Dolbeault2, Dolbeault} and the references therein). In particular, the linear and local diffusion counterpart of (\ref{eq2}) is
\begin{equation}\label{eq2local}
\left\{
\begin{array}{l}
\pat\rho=\beta\pax^2\rho+\pax\left(\rho\pax v\right),\qquad x\in\TT, t>0,\\
\pax^2 v=\rho-\rh,
\end{array}
\right.
\end{equation}
with $\beta>0$. This system has been previously addressed as a model of gravitational collapse by Biler and collaborators (see \cite{Bi5,Bi4,Bi3}), and Chavanis and Sire (see \cite{Chd,Cha}). Also, the system \eqref{eq2local} has been also proposed as a model of chemotaxis in biological system (see \cite{jager1992explosions, keller1970initiation, patlak1953random}).  We also notice that, in two dimensions, \eqref{eq2local} can be re-written as
$$
\left\{
\begin{array}{l}
\pat \rho=\beta\Delta\rho+\nabla\cdot(\nabla U\rho),\\
\Delta U=\rho-\rh,
\end{array}
\right.
$$
which is similar to the vorticity formulation for 2D Navier-Stokes. Its mathematical properties have been widely studied in different physical contexts (see e.g. \cite{Bi6,blanchet2011parabolic, BCM, blanchet2010functional, Dolbeault4,Dolbeault3, Carrillo2, Dolbeault2, Dolbeault} and references therein. 
For instance, Corrias, Perthame and Zaag in \cite{corrias2004global} proved that, for small data $\rho_0$ in $L^{d/2}$, where $d\geq 2$ is the space dimension, there are global in time weak solutions to equation \eqref{eq2local}  and blow-up if the smallness condition does not hold. Global existence when the initial data is small in $L^1$ has been recently addressed in \cite{nagai2011global}. The case of measure-valued weak solutions has been considered by Senba and Suzuki  in \cite{senba2002weak}.

Its linear and nonlocal diffusion counterpart is 
\begin{equation}\label{eq2nolocal}
\left\{
\begin{array}{l}
\pat\rho=-\beta\Lambda^\alpha\rho+\pax\left(\rho\pax v\right),\text{ for $x\in\TT, t>0$},\\
\pax^2 v=\rho-\rh,
\end{array}
\right.
\end{equation}
where the fractional Laplacian $\Lambda^\alpha=(-\Delta)^{\alpha/2}$ is defined using Fourier techniques as follows
$$
\widehat{\Lambda^\alpha u}=|\xi|^\alpha\hat{u}.
$$
This non-local generalization \eqref{eq2nolocal} has been recently studied (see \cite{biler2010blowup,BilerWu,bournaveas2010one}). In particular, Li, Rodrigo and Zhang \cite{li2010exploding} have established local existence, a continuation criterion and the existence of finite time singularities for the two dimensional case.  In \cite{AGM} Ascasibar, Granero and Moreno have recovered the local existence and the continuation criterion by means of different techniques. Also, the global existence for small initial datum in $L^\infty$ for all $0<\alpha<2$ and $d=2,3$ and $\alpha=1$ while $d=1$, and global existence for $1<\alpha<2$ and $d=1$ has been proved in \cite{AGM}.

Nonlinear generalizations of \eqref{eq2local} have been studied in \cite{BN, BNB, blanchet2009critical} and the references therein, while nonlinear generalizations of \eqref{eq2nolocal} have been addressed in \cite{CV, CC, CC2, CCCF}. In particular, in \cite{Carrillo, CC, CC2, CCCF}, the authors studied the equation
\begin{equation}\label{eq2angel}
\pat\theta=\pax\left(-\theta H\theta\right)-\nu\Lambda\theta.
\end{equation}
This equation has been proposed as a one-dimensional model of the 2D Vortex Sheet
problem or a one dimensional model of the 2D surface quasi-geostrophic equation. Some of its mathematical properties are well-known. In particular, Castro and C\'ordoba proved  local existence, global existence with assumptions, blow-up in finite
time and ill-posedness depending on the sign of the initial data for classical solutions of equations \eqref{eq2angel}. We notice that \eqref{eq2} without self-attraction terms and $\beta(x)=x$ reduces to \eqref{eq2angel}.

The plan of the paper is as follows: in Section \ref{sec2} we prove that the problem is well-posed. In Section \ref{sec3} we obtain a uniform bound for $\|\rho(t)\|_{L^\infty(\TT)}$. In Sections \ref{sec4} and \ref{sec5} we prove global existence of solution corresponding to small initial data in $H^2$ and $L^1$ for some choices of $\beta(x)$. Finally, in Section \ref{sec6} we perform some numerical simulations to better understand the role of $\beta$.

\section{Well-posedness}\label{sec2}
In this section we study the existence of classical solution to \eqref{eq2} in a small time interval $[0,\tau]$ with initial data  $\rho_0(x)\geq0$ in the Sobolev space $H^2(\TT)$ with the natural norm defined by
$$
\|\rho\|_{H^s(\TT)}^2=\|\rho\|_{L^2(\TT)}^2+\|\Lambda^s \rho\|_{L^2(\TT)}^2.
$$
The main ingredients of the proof are the following identities 
\begin{equation}\label{calderon}
\frac{1}{2\pi}\|Hg\|^2_{L^2(\TT)}=\frac{1}{2\pi}\|g\|^2_{L^2(\TT)}-\langle g \rangle^2\leq \frac{1}{2\pi}\|g\|^2_{L^2(\TT)},
\end{equation}
and, for general $1<p<\infty$,
\begin{equation}\label{calderon2}
\|Hg\|_{L^p(\TT)}\leq C_H(p)\|g\|_{L^p(\TT)},
\end{equation}
see \cite{stein1970singular} for further details on singular integral operators.

We note that $\Lambda g=H\pax g=\sqrt{-\pax^2}\,g$. This operator has the following integral representation  
$$
\Lambda g(x)=\frac{1}{2\pi}\text{P.V.}\int_\TT\frac{g(x)-g(y)}{\sin^{2}\left(\frac{x-y}{2}\right)}dy.
$$
We will require the following pointwise (see \cite{cordoba2003pointwise}) 
\begin{equation}\label{intporpart}
2g\Lambda g\geq \Lambda(g^2).
\end{equation}
Also, we use the following Gagliardo-Niremberg-Sobolev inequalities
\begin{eqnarray}\label{GN}
\|\pax \rho\|_{L^4(\TT)}&\leq & \sqrt{3}\|\pax^2\rho\|_{L^2(\TT)}^{1/2}\|\rho-\langle \rho\rangle\|_{L^\infty(\TT)}^{1/2}\\
\label{S1}
\|\rho-\rh\|_{L^\infty(\TT)}&\leq & C_S\|\pax\rho\|_{L^2(\TT)}.
\end{eqnarray}
Moreover, we can take $C_S$ such that
\begin{equation}\label{S2}
\|\rho-\rh\|_{C^{1/2}(\TT)}\leq C_S\|\pax\rho\|_{L^2(\TT)}.
\end{equation}
Using the reverse triangle inequality, we get 
\begin{equation}\label{S}
\|\rho\|_{L^\infty(\TT)}\leq C_S\|\pax\rho\|_{L^2(\TT)}+\rh.
\end{equation}

To prove the existence and uniqueness of classical solution, we proceed as in \cite{bertozzi-Majda}. First, we obtain some \emph{`a priori'}  bounds for the usual norm in the space $H^k$. Then, we regularize equation \eqref{eq2} and prove that all the regularized systems have a classical solution for a  small time $\tau(\rho_0)$. To conclude, we use the \emph{`a priori'} bound to show that the solutions to the regularized problem form a Cauchy sequence whose limit is the solution to the original equation. The result is

\begin{teo}[Local well-posedness]\label{WellPT}
Let $\beta\in C^4([0,\infty))$, $\beta(x)\geq 0$ be a given function. Let $\rho_0\in H^k(\TT)$ with $k\geq 2$ and $\rho_0\geq 0$ be the initial data. Then, there exists an unique solution $\rho\in C([0,\tau],H^k(\TT))$ of \eqref{eq2} with $\tau=\tau(\rho_0)>0$. Moreover, $$\rho\in C^1([0,\tau],C(\TT))\cap C([0,\tau],C^{1,1/2}(\TT)).$$
\end{teo}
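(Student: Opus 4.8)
The plan is to follow the three-step scheme of Bertozzi and Majda announced before the statement: first derive \emph{a priori} $H^k$ bounds for \eqref{eq2}, then solve a mollified system by an ODE argument in $H^k(\TT)$, and finally pass to the limit, using the uniform bounds to produce the solution together with uniqueness and the claimed time regularity. Throughout I would first recast the self-attraction flux in a less singular form: since $\pax^2 v=\rho-\rh$, one has $\pax(\rho\pax v)=\pax\rho\,\pax v+\rho(\rho-\rh)$, where $\pax v=\pax^{-1}(\rho-\rh)$ gains one derivative. Thus this contribution is a transport term with smooth velocity plus a zeroth-order nonlinearity; in the energy estimate the transport term produces, after integration by parts, $-\tfrac12\int_\TT(\rho-\rh)(\pax^k\rho)^2\,dx$, controlled through \eqref{S1} and \eqref{S}, while $\rho(\rho-\rh)$ is handled by standard product (Moser) estimates.

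The genuinely delicate contribution is $-\pax(\beta(\rho)H\rho)$, and this is the main obstacle. Applying $\pax^k$, testing against $\pax^k\rho$, and expanding by Leibniz (using $\pax H=\Lambda$), the top-order term is $-2\int_\TT\pax^k\rho\,\beta(\rho)\,\Lambda\pax^k\rho\,dx$, which carries the \emph{same} number of derivatives as the energy itself and therefore cannot be closed by Cauchy--Schwarz without losing a derivative. This is exactly where the structural hypotheses must be exploited. Since $\rho_0\geq0$ and $\beta\geq 0$, so that $\beta(\rho)\geq0$, I would multiply the pointwise inequality \eqref{intporpart} with $g=\pax^k\rho$ by $\beta(\rho)$, integrate, and use the self-adjointness of $\Lambda$ to obtain
\begin{equation*}
-2\int_\TT\beta(\rho)\,\pax^k\rho\,\Lambda\pax^k\rho\,dx\leq-\int_\TT\Lambda(\beta(\rho))\,(\pax^k\rho)^2\,dx\leq\|\Lambda\beta(\rho)\|_{L^\infty(\TT)}\,\|\pax^k\rho\|_{L^2(\TT)}^2,
\end{equation*}
so the dangerous derivative is transferred from $\rho$ onto the smooth coefficient $\beta(\rho)$. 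For $k\geq 2$ the embedding $H^k(\TT)\hookrightarrow C^{1,1/2}(\TT)$ together with $\beta\in C^4$ bounds $\|\Lambda\beta(\rho)\|_{L^\infty(\TT)}$ by a continuous function of $\|\rho\|_{H^k(\TT)}$. The remaining Leibniz commutators each place at most $k$ derivatives on a single factor; the worst of them, of the form $\int_\TT a\,g\,Hg\,dx$ with $a$ a lower-order factor bounded in $L^\infty(\TT)$, is controlled using \eqref{calderon} and \eqref{calderon2}. Collecting all terms yields
\begin{equation*}
\frac{d}{dt}\|\rho\|_{H^k(\TT)}^2\leq C\big(\|\rho\|_{H^k(\TT)}\big),
\end{equation*}
with $C$ continuous and increasing, hence integrable up to some $\tau=\tau(\rho_0)>0$.

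For the construction I would regularize \eqref{eq2} by inserting mollifiers $\mathcal{J}_\epsilon$ into the nonlinear flux so that the right-hand side becomes a locally Lipschitz map of $\rho^\epsilon$ into $H^k(\TT)$; the Picard theorem in the Banach space $H^k(\TT)$ then provides a unique local solution $\rho^\epsilon$. Because mollifiers are self-adjoint $L^2$-contractions commuting with $\Lambda$ and $\pax$, the energy computation above survives uniformly in $\epsilon$, giving a common existence time $\tau$ and a uniform bound $\sup_\epsilon\sup_{[0,\tau]}\|\rho^\epsilon\|_{H^k(\TT)}<\infty$. A difference estimate in $L^2(\TT)$ — closed once more through \eqref{intporpart} and \eqref{calderon}, with the uniform $H^k$ bound absorbing the loss of one derivative — shows that $\{\rho^\epsilon\}$ is Cauchy in $C([0,\tau],L^2(\TT))$; interpolation against the uniform $H^k$ bound upgrades this to convergence in $C([0,\tau],H^{k'}(\TT))$ for every $k'<k$, which suffices to pass to the limit in the equation and to deduce $\rho\in L^\infty([0,\tau],H^k(\TT))\cap C([0,\tau],H^{k'}(\TT))$. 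Uniqueness follows from the same $L^2$ difference estimate applied to two solutions. Finally, reading $\pat\rho$ directly off \eqref{eq2} shows $\pat\rho\in C([0,\tau],C(\TT))$, hence $\rho\in C^1([0,\tau],C(\TT))$, while $H^2(\TT)\hookrightarrow C^{1,1/2}(\TT)$ gives $\rho\in C([0,\tau],C^{1,1/2}(\TT))$; the continuity $\rho\in C([0,\tau],H^k(\TT))$, rather than merely $L^\infty$ in time, is recovered by the standard argument combining weak continuity in $H^k(\TT)$ with continuity of the norm.
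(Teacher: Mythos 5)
Your overall scheme is the paper's own: the Majda--Bertozzi three-step construction, with the dangerous top-order diffusion term closed exactly as in the paper --- multiplying the pointwise inequality \eqref{intporpart} by $\beta(\rho)\geq0$, using the self-adjointness of $\Lambda$ to shift the derivative onto the coefficient, and bounding $\|\Lambda\beta(\rho)\|_{L^\infty(\TT)}$ (the paper's \eqref{boundlambda}); the mollified system plus Picard, the $L^2$ difference estimate for both compactness and uniqueness, and the final regularity claims also match the paper. There is, however, one concrete slip in your bookkeeping of the remaining terms: the claim that ``the remaining Leibniz commutators each place at most $k$ derivatives on a single factor'' is false. Expanding $\pax^{k+1}\bigl(\beta(\rho)H\rho\bigr)$ produces, besides the term $\beta(\rho)\Lambda\pax^k\rho$ you isolated, the transport term $\beta'(\rho)\,H\rho\,\pax^{k+1}\rho$, which carries $k+1$ derivatives; tested against $\pax^k\rho$ it cannot be closed by Cauchy--Schwarz against $\|\rho\|_{H^k(\TT)}$, and your identified ``worst term'' $\int_\TT a\,g\,Hg\,dx$ is not in fact the worst. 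This is precisely the paper's most singular term $J_9$. The repair is routine and fits your framework --- write $\pax^k\rho\,\pax^{k+1}\rho=\tfrac12\pax\bigl((\pax^k\rho)^2\bigr)$ and integrate by parts onto the coefficient, whose derivative $\beta''(\rho)\pax\rho\,H\rho+\beta'(\rho)\Lambda\rho$ lies in $L^\infty(\TT)$ for $\rho\in H^k(\TT)$, $k\geq2$ --- but as written your estimate would not close.

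Beyond this, you diverge from the paper in one defensible detail. The paper augments the energy with $d[\rho]=1/(\lambda-\rho)$ for a fixed $\lambda>\|\rho_0\|_{L^\infty(\TT)}$, so that the linear growth bounds \eqref{boundbeta} on $\beta$ and its derivatives apply while $\|\rho\|_{L^\infty(\TT)}<\lambda$; this yields the explicit polynomial inequality $\frac{d}{dt}E[\rho]\leq c(E[\rho]+1)^5$ and the quantitative lifespan \eqref{boundE}. Your alternative --- letting every constant depend continuously and increasingly on $\|\rho\|_{H^k(\TT)}$ through $\sup_{0\leq y\leq\|\rho\|_{L^\infty(\TT)}}|\beta^{(j)}(y)|$ --- is perfectly adequate for bare local existence and avoids the $d[\rho]$ bookkeeping, at the price of a nonexplicit $\tau$ (the paper's remark that $\delta$ in the continuation criterion can be quantified uses \eqref{boundE}, so something is lost). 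Two smaller points: the paper's regularization inserts $+\epsilon$ into $\beta$, which your version omits harmlessly; and you should record explicitly, as the paper does at the outset, that nonnegativity of $\rho$ is propagated, since $\beta$ is only defined and signed on $[0,\infty)$ and your application of \eqref{intporpart} multiplied by $\beta(\rho)\geq0$ depends on it.
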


In order to simplify the notation, we will abbreviate $\rho(x,t) = \rho(x)$, or simply $\rho$, throughout the rest of the paper.

\begin{proof}
First, we remark that, for nonnegative initial data, the solution remains nonnegative and we have conservation of mass
$$
\int \rho(x,t)dx=\int \rho(x,0)dx.
$$
Thus, $\rh=\langle\rho_0\rangle$ is a constant depending only on the initial data. We show the case $k=2,$ being the other cases analogous. Now, fix $\lambda >\|\rho_0\|_{L^\infty}$ a constant and define the energy
\begin{equation}\label{enerho}
E[\rho]=\|\rho\|_{H^2(\TT)}+\left\|d[\rho]\right\|_{L^\infty(\TT)},
\end{equation}
where
\begin{equation}\label{drho}
d[\rho]=\frac{1}{\lambda-\rho(x)}.
\end{equation}
Due to the smoothness of $\beta$, for $0\leq x\leq\lambda$, we have
\begin{multline*}
|\pax^j\beta(x)|-|\pax^j \beta(0)|\leq\left||\pax^j\beta(x)|-|\pax^j \beta(0)|\right|\\
\leq|\pax^j\beta(x)-\pax^j \beta(0)|\leq \max_{0\leq y\leq \lambda}|\pax^{j+1}\beta(y)| \, x,
\end{multline*}
and
\begin{equation}\label{boundbeta}
|\pax^j\beta(x)|\leq c(\beta,\lambda)(x+1),\;\;j=0,...,3.
\end{equation}
Now we study the evolution of the $L^2(\TT)$ norm of the solution. We denote $c$ a constant depending only on the function $\beta$ and on the fixed constant $\lambda$. Thus, this constant is harmless and it can change from line to line. Using \eqref{calderon} and H\" older's inequality, we have
\begin{multline*}
\frac{1}{2}\frac{d}{dt}\|\rho\|_{L^2(\TT)}^2
=\int_\TT\pax\rho(x)\beta(\rho(x))H\rho(x)dx\\+\int_{\TT}\rho(x)\pax\rho(x)\pax v(x)dx+\int_{\TT}\rho^2(x)(\rho-\rh)dx\\
\leq c\|\rho\|_{L^2(\TT)}\left((\|\rho\|_{L^ \infty (\TT)}+1)\|\pax \rho\|_{L^2(\TT)}+\|\rho-\langle\rho\rangle\|_{L^\infty(\TT)}\|\rho\|_{L^2(\TT)}\right).
\end{multline*}
Using Sobolev embedding \eqref{S} and the inequality 
$$
\|\rho-\langle\rho\rangle\|_{L^\infty(\TT)}\leq\|\rho\|_{L^\infty(\TT)},
$$ 
we have
\begin{equation}\label{L2}
\frac{d}{dt}\|\rho\|_{L^2(\TT)}\leq c(\|\rho\|_{L^\infty(\TT)}+1)\|\rho\|_{H^2(\TT)}\leq c(\|\rho\|_{H^2(\TT)}+1)^2.
\end{equation}
We study now the second derivative. Firstly, the transport terms corresponding to $v$:
\begin{multline}\label{I1}
I_1=\int_\TT\pax^ 2\rho\pax^ 2(\pax\rho\pax v + \rho(\rho-\rh))\\
\leq\int_\TT\pax^ 2\rho(\pax^ 3\rho\pax v+3\pax^2\rho(\pax\rho)^2+3 \pax^ 2\rho(\rho-rh)+ \pax^ 2\rho\rho)dx\\
\leq c(\|\rho-\rh\|_{L^\infty(\TT)}+\|\rho\|_{L^\infty(\TT)})\|\pax^ 2\rho\|_{L^2(\TT)}\|\rho\|_{H^2(\TT)},
\end{multline}
where in the last step we have used \eqref{GN}. Now, the term corresponding to the nonlinear diffusion is
\begin{eqnarray*}
I_2 &=&-\int_\TT\pax^ 2\rho\pax^ 2(\beta(\rho)\Lambda \rho)\\
&=&-\int_\TT\pax^ 2\rho(\beta''(\rho)(\pax\rho)^2\Lambda\rho+\beta'(\rho)\pax^2\rho\Lambda\rho+2\beta'(\rho)\pax\rho\Lambda\pax\rho+\beta(\rho)\Lambda\pax^ 2\rho)dx.
\end{eqnarray*}
Using \eqref{GN} and \eqref{boundbeta}, we obtain
\begin{multline*}
J_1 =-\int_\TT\pax^ 2\rho\beta''(\rho)(\pax\rho)^2\Lambda\rho dx\\
\leq c\left(\|\rho\|_{L^\infty(\TT)}+1\right)\|\Lambda \rho\|_{L^\infty(\TT)}\|\rho-\rh\|_{L^\infty(\TT)}\|\pax^2\rho\|_{L^2(\TT)}^2,
\end{multline*}
and
$$
J_2=-\int_\TT\beta'(\rho)(\pax^2\rho)^2\Lambda\rho dx\leq c\left(\|\rho\|_{L^\infty(\TT)}+1\right)\|\Lambda\rho\|_{L^\infty(\TT)}\|\pax^2\rho\|_{L^2(\TT)}^2.
$$
Due to \eqref{calderon}, we get
$$
J_3=-\int_\TT\beta'(\rho)\pax\rho\pax^2\rho\Lambda\pax\rho dx\leq c\left(\|\rho\|_{L^\infty(\TT)}+1\right)\|\pax \rho\|_{L^\infty(\TT)}\|\pax^2\rho\|_{L^2(\TT)}^2. 
$$
We study now the singular term  in $I_2$. By \eqref{intporpart}, we have
$$
J_4=-\int_\TT\pax^ 2\rho\beta(\rho)\Lambda\pax^ 2\rho dx\leq -\frac{1}{2}\int_\TT\beta(\rho)\Lambda(\pax^2\rho)^ 2 dx\leq -\frac{1}{2}\int_\TT\Lambda\beta(\rho)(\pax^2\rho)^ 2 dx.
$$
We compute 
\begin{multline*}
\Lambda \beta(\rho(x))=\frac{1}{2\pi}\text{P.V.}\int_\TT\left(\frac{\beta(\rho(x))-\beta(\rho(y))}{\rho(x)-\rho(y)}-\beta'(\rho(x))\right)\frac{\rho(x)-\rho(y)}{\sin^{2}\left(\frac{x-y}{2}\right)}dy\\+\beta'(\rho(x))\Lambda \rho(x).
\end{multline*}
Using Taylor's Theorem, we obtain
$$
\left|\frac{\beta(\rho(x))-\beta(\rho(y))}{\rho(x)-\rho(y)}-\beta'(\rho(x))\right|\leq \frac{|\beta''(\zeta)||\rho(x)-\rho(y)|}{2}.
$$
Since we have an extra cancellation and using \eqref{boundbeta}, we have the bound
\begin{equation}\label{boundlambda}
|\Lambda \beta(\rho(x))|\leq c(\|\rho\|_{L^\infty(\TT)}+1)\left(\|\pax \rho\|^2_{L^\infty(\TT)}+ \|\Lambda \rho\|_{L^\infty(\TT)}\right).
\end{equation}
Putting all together we obtain
$$
J_4\leq \|\pax^ 2\rho\|_{L^2(\TT)}^2c(\|\rho\|_{L^\infty(\TT)}+1)\left(\|\pax \rho\|^2_{L^\infty(\TT)}+ \|\Lambda \rho\|_{L^\infty(\TT)}\right).
$$
Collecting all the estimates and using Sobolev embedding, we obtain
\begin{equation}\label{I2}
I_2=J_1+J_2+J_3+J_4\leq c\|\pax^ 2\rho\|_{L^2(\TT)}(\|\rho\|_{H^2(\TT)}+1)^4.
\end{equation}
It remains to show that the transport terms with a singular non-local velocity 
$$
I_3=-\int_\TT\pax^ 2\rho\pax^ 2(\beta'(\rho)\pax\rho H\rho),
$$
are bounded. The lower order terms can be bounded as follows
\begin{multline*}
J_5=-\int_\TT\pax^2\rho\beta'''(\rho)(\pax\rho)^ 3H\rho dx\\
\leq c\left(\|\rho\|_{L^\infty(\TT)}+1\right)\|\pax \rho\|_{L^\infty(\TT)}\|\rho-\rh\|_{L^\infty(\TT)}\|H \rho\|_{L^\infty(\TT)}\|\pax^2\rho\|_{L^2(\TT)}^2,
\end{multline*}
\begin{multline*}
J_6=-\int_\TT(\pax^2\rho)^2\beta''(\rho)\pax\rho H\rho dx\leq c\left(\|\rho\|_{L^\infty(\TT)}+1\right)\|\pax \rho\|_{L^\infty(\TT)}\\
\cdot\|H \rho\|_{L^\infty(\TT)}\|\pax^2\rho\|_{L^2(\TT)}^2,
\end{multline*}
\begin{multline*}
J_7=-\int_\TT\pax^2\rho\beta''(\rho)(\pax\rho)^2 \Lambda\rho dx\leq c\left(\|\rho\|_{L^\infty(\TT)}+1\right)\|\Lambda \rho\|_{L^\infty(\TT)}\\
\cdot\|\rho-\rh\|_{L^\infty(\TT)}\|\pax^2\rho\|_{L^2(\TT)}^2,
\end{multline*}
\begin{multline*}
J_8=-\int_\TT\pax^2\rho\beta'(\rho)(\Lambda\rho \pax^2\rho+\Lambda\pax\rho \pax\rho) dx\\
\leq c\left(\|\rho\|_{L^\infty(\TT)}+1\right)(\|\Lambda \rho\|_{L^\infty(\TT)}+\|\pax \rho\|_{L^\infty(\TT)})\|\pax^2\rho\|^2_{L^2(\TT)}.
\end{multline*}
The most singular term in $I_3$ is
\begin{multline*}
J_9=-\int_\TT\pax^2\rho\beta'(\rho)H\rho \pax^3\rho dx\\
\leq c\left(\|\rho\|_{L^\infty(\TT)}+1\right)(\|\Lambda \rho\|_{L^\infty(\TT)}+\|\pax \rho\|_{L^\infty(\TT)}\|H \rho\|_{L^\infty(\TT)})\|\pax^2\rho\|^2_{L^2(\TT)}.
\end{multline*}
Thus, we obtain the following bound
\begin{equation}\label{I3}
I_3=J_5+\cdots +J_9\leq c\|\pax^ 2\rho\|_{L^ 2(\TT)}(\|\rho\|_{H^2(\TT)}+1)^5.
\end{equation}
Then,
$$
\frac{1}{2}\frac{d}{dt}\|\pax^ 2\rho\|_{L^ 2(\TT)}^ 2\leq c\|\pax^ 2\rho\|_{L^ 2(\TT)}(\|\rho\|_{H^2(\TT)}+1)^5,
$$
and, using \eqref{L2}, \eqref{I1}, \eqref{I2} and \eqref{I3}, we conclude that, while $\|\rho\|_{L^\infty(\TT)}<\lambda$, the following inequality holds
\begin{equation}\label{boundh}
\frac{d}{dt}\|\rho\|_{H^2(\TT)}\leq c(\|\rho\|_{H^2(\TT)}+1)^5\leq c(E[\rho]+1)^5 .
\end{equation}
We need a bound for the remaining term in the energy \eqref{enerho}. Using \eqref{enerho}, \eqref{drho} and Sobolev embedding, we have
$$
\frac{d}{dt}d[\rho]\leq d[\rho]^2\|\pat\rho\|_{L^\infty(\TT)}\leq d[\rho]\|d[\rho]\|_{L^ \infty(\TT)} E[\rho]^3.
$$
Thus, we obtain
$$
d[\rho](t+h)\leq d[\rho](t)\exp
\left(\int_t^{t+h} \|d[\rho]\|_{L^ \infty(\TT)} E[\rho]^3ds\right).
$$
Finally, we have
\begin{equation}\label{boundd}
\frac{d}{dt}\|d[\rho]\|_{L^\infty(\TT)}=\lim_{h\rightarrow0}\frac{\|d[\rho](t+h)\|_{L^\infty(\TT)}-\|d[\rho](t)\|_{L^\infty(\TT)}}{h}\leq E[\rho]^ 5.
\end{equation}
Thanks to \eqref{boundh} and \eqref{boundd} we conclude the \emph{'a priori'} energy estimates:
$$
\frac{d}{dt}E[\rho]\leq c(E[\rho]+1)^5,
$$
and then
\begin{equation}\label{boundE}
E[\rho](t)\leq \frac{E[\rho_0]+1}{\sqrt[4]{1-4ct(E[\rho_0]+1)^4}}-1.
\end{equation}

Our next step is classical. We consider $\mathcal{J}$ a symmetric and positive mollifier, see \cite{bertozzi-Majda}. For $\epsilon>0$, we define
\begin{equation}
\mathcal{J}_\epsilon(x)=\frac{1}{\epsilon}\mathcal{J}\left(\frac{x}{\epsilon}\right)
\label{epsi} 
\end{equation}
and consider the regularized problems
$$
\left\{
\begin{array}{l}
\pat \rho^{\epsilon}=\jeps\pax\left(-(\beta(\jeps\rho^{\epsilon})+\epsilon)H\jeps\rho^{\epsilon}\right)+\jeps\pax\left(\jeps\rho^\epsilon\pax v^\epsilon\right),\\
\pax^ 2 v^\epsilon=\jeps\rho^{\epsilon}-\rh.
\end{array}
\right.
$$
Notice that these regularized systems remains positive for all times and conserve the total mass,
$$
\|\rho^\epsilon(t)\|_{L^1}=\|\rho_0\|_{L^1}.
$$
Thus, using Tonelli's Theorem in a classical way, we get
$$
\|\jeps\rho^\epsilon(t)\|_{L^1}=\|\rho_0\|_{L^1}.
$$
We can apply Picard's Theorem to these regularized problems. Define the set 
$$
O_\varsigma^\sigma=\{\rho\in H^2(\TT), \|\rho\|_{H^2(\TT)}<\sigma,\|\rho\|_{L^ \infty(\TT)}<\varsigma\},
$$
with $\|\rho_0\|_{H^2(\TT)}<\sigma$ and $\|\rho_0\|_{L^\infty(\TT)}<\varsigma<\lambda$, and observe that it is a non-empty open set in $H^2(\TT)$. To prove this claim just observe that, due to the Sobolev embedding, $\|\cdot\|_{L^ \infty(\TT)}$ and $\|\cdot\|_{H^ 2(\TT)}$ are continuous functionals. In this set we have $E[\rho]\leq C(\sigma,\lambda,\beta)$. Then, there exists a sequence $\rho^\epsilon$ of solutions to the regularized problems. For each $\rho^ \epsilon$ the bound \eqref{boundE} is also valid. So, we have a common time interval $[0,\tau(\rho_0)]$ where the solutions live. Now we can pass to the limit $\epsilon\to0$. To show this claim we have to prove that the sequence is Cauchy in the lower norm $L^ 2(\TT)$. These steps are quite classical, so, for the sake of brevity, we left the details for the interested reader. This concludes with the existence issue. 

We need to prove the uniqueness. Suppose that $\rho_1,\rho_2$ are two different classical solutions corresponding to the same initial datum and denote $\rho=\rho_1-\rho_2$. Then, 
\begin{multline*}
\frac{1}{2}\frac{d}{dt}\|\rho\|^ 2_{L^2(\TT)}=\int_\TT\rho\left(\rho_1(\rho_1-\rhh)-\rho_2(\rho_2-\rhh)+\pax\rho_1\pax v_1-\pax\rho_2\pax v_2\right.\\
\left.-(\beta(\rho_1)\Lambda\rho_1-\beta(\rho_2)\Lambda\rho_2)-(\beta'(\rho_1)\pax\rho_1H\rho_1-\beta'(\rho_2)\pax\rho_2H\rho_2)\right)dx.
\end{multline*}
We compute
\begin{multline*}
I_4=\int_\TT\rho\left(\rho_1(\rho_1-\rh)-\rho_2(\rho_2-\rh)\right)dx \\
=\int_\TT\rho^2\left(\rho_1+\rho_2-\rhh\right)\leq c(\|\rho_1\|_{H^2(\TT)},\|\rho_2\|_{H^2(\TT)})\|\rho\|_{L^2(\TT)}^2,
\end{multline*}
and
\begin{multline*}
I_5=\int_\TT\rho\left(\pax\rho\pax v_1+\pax\rho_2(\pax v_1-\pax v_2)\right)dx\\
\leq c(\|\rho_1\|_{H^ 2(\TT)},\|\rho_2\|_{H^ 2(\TT)})\|\rho\|_{L^2(\TT)}(\|\rho\|_{L^2(\TT)}+\|\pax(v_1-v_2)\|_{L^2(\TT)}).
\end{multline*}
Notice that we have
$$
\|\pax(v_1-v_2))\|_{L^2(\TT)}^2\leq\|v_1-v_2\|_{L^2(\TT)}\|\rho \|_{L^2(\TT)}.
$$
Using Poincar\' e inequality for $\|v_1-v_2\|_{L^2(\TT)}$, we get
$$
I_5\leq c(\|\rho_1\|_{H^ 2(\TT)},\|\rho_2\|_{H^ 2(\TT)})\|\rho\|_{L^2(\TT)}^2.
$$
Now we have to deal with the nonlinear diffusion:
$$
I_6=-\int_\TT\rho\left((\beta(\rho_1)-\beta(\rho_2))\Lambda\rho_1+\beta(\rho_2)\Lambda\rho\right)dx=J_{10}+J_{11}.
$$
In the term $J_{10}$ we use the smoothness of the function $\beta$ to obtain
$$
|\beta(\rho_1)-\beta(\rho_2)|\leq c(\|\rho_1\|_{H^ 2(\TT)},\|\rho_2\|_{H^ 2(\TT)})|\rho|,
$$ 
and we conclude 
$$
J_{10}\leq c(\|\rho_1\|_{H^ 2(\TT)},\|\rho_2\|_{H^2(\TT)})\|\rho\|_{L^2(\TT)}^2.
$$
In $J_{11}$ we use \eqref{intporpart} and \eqref{boundlambda} and we obtain a similar bound. We conclude
$$
I_6\leq c(\|\rho_1\|_{H^2(\TT)},\|\rho_2\|_{H^2(\TT)})\|\rho\|_{L^2(\TT)}^2.
$$
Only remains the transport term with the Hilbert Transform. We have
\begin{eqnarray*}
I_7&=&-\int_\TT\rho\left(\beta'(\rho_1)\pax\rho_1H\rho_1-\beta'(\rho_2)\pax\rho_2H\rho_2\right)dx,\\
&=&-\int_\TT\rho(\beta'(\rho_1)-\beta'(\rho_2))\pax\rho_1H\rho_1 dx-\int_\TT\rho \beta'(\rho_2)(\pax\rho H\rho_1+\pax\rho_2 H\rho))dx,\\
&\leq & c(\|\rho_1\|_{H^ 2(\TT)},\|\rho_2\|_{H^ 2(\TT)})\|\rho\|_{L^2(\TT)}^2,
\end{eqnarray*}
where we have used that $\beta$ is smooth enough. Then, collecting all the estimates together and using Gronwall's Inequality, we conclude the uniqueness.
\end{proof}

\section{Continuation criteria}\label{sec3}

In this section we use the following Lemma was proved in \cite{AGM} with inessential changes.

\begin{lem}\label{LemLambda}
Let $\rho\geq0$ be a smooth function that attains its maximum in the point $x_t$ and such that this maximum verifies $\rho(x_t)\geq4\rh$. Then
$$
\Lambda\rho\geq\frac{\rho^ 2(x_t)}{4\pi^ 2\rh}.
$$
\end{lem}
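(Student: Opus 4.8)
The plan is to evaluate the integral representation of $\Lambda\rho$ at the maximum point and to combine the nonnegativity of the integrand there with the mass constraint. Writing
$$
\Lambda\rho(x_t)=\frac{1}{2\pi}\text{P.V.}\int_\TT\frac{\rho(x_t)-\rho(y)}{\sin^2\left(\frac{x_t-y}{2}\right)}\,dy,
$$
I would first observe that, since $x_t$ is a global maximum, $\rho(x_t)-\rho(y)\geq0$ for every $y$, so the integrand is nonnegative; the principal value is therefore a genuine integral and I may restrict the domain of integration to any subset and only \emph{decrease} the value. This is the only use of the maximum hypothesis, and it is what lets the crude lower bounds below be legitimate.

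Next I would isolate the region where the numerator is large. Set $A=\{y\in\TT:\rho(y)\leq\rho(x_t)/2\}$, so that on $A$ the numerator satisfies $\rho(x_t)-\rho(y)\geq\rho(x_t)/2$. The complement is controlled by conservation of mass: $2\pi\rh=\int_\TT\rho\geq\int_{A^c}\rho\geq\frac{\rho(x_t)}{2}|A^c|$, hence $|A^c|\leq\delta:=4\pi\rh/\rho(x_t)$, and the hypothesis $\rho(x_t)\geq4\rh$ forces $\delta\leq\pi$. After the substitution $\eta=x_t-y$ and the elementary inequality $\sin^2(\eta/2)\leq(\eta/2)^2$, i.e. $\sin^{-2}(\eta/2)\geq4/\eta^2$, restricting to $A$ gives
$$
\Lambda\rho(x_t)\geq\frac{\rho(x_t)}{4\pi}\int_A\frac{d\eta}{\sin^2(\eta/2)}\geq\frac{\rho(x_t)}{\pi}\int_A\frac{d\eta}{\eta^2}.
$$

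The step I expect to be the crux is the lower bound on $\int_A\eta^{-2}\,d\eta$, because $A$ is known only through the measure of its complement $S=A^c$, and $S$ contains $\eta=0$ — precisely where the weight $\eta^{-2}$ is singular and most valuable. The key estimate is a rearrangement (bathtub) comparison: among all sets $S$ with $|S|\leq\delta$, the integral $\int_{\TT\setminus S}\eta^{-2}\,d\eta$ is smallest when $S$ is the centered interval $B=(-\delta/2,\delta/2)$ that swallows the singularity. This can be made rigorous by a cut-and-paste argument: since $\eta^{-2}\geq(\delta/2)^{-2}$ on $B$ and $\eta^{-2}\leq(\delta/2)^{-2}$ off $B$, and $|S\setminus B|\leq|B\setminus S|$ whenever $|S|\leq|B|$, one gets $\int_{B\setminus S}\eta^{-2}\geq\int_{S\setminus B}\eta^{-2}$, hence $\int_{A}\eta^{-2}\geq\int_{\delta/2\leq|\eta|\leq\pi}\eta^{-2}\,d\eta=\frac{4}{\delta}-\frac{2}{\pi}$.

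To finish I would substitute $\delta=4\pi\rh/\rho(x_t)$, so that $\frac{4}{\delta}-\frac{2}{\pi}=\frac{1}{\pi}\big(\frac{\rho(x_t)}{\rh}-2\big)$, and absorb the constant using $\rho(x_t)\geq4\rh$, which yields $\frac{\rho(x_t)}{\rh}-2\geq\frac{\rho(x_t)}{2\rh}$. Combining with the displayed inequality gives
$$
\Lambda\rho(x_t)\geq\frac{\rho(x_t)}{\pi}\cdot\frac{1}{\pi}\cdot\frac{\rho(x_t)}{2\rh}=\frac{\rho^2(x_t)}{2\pi^2\rh}\geq\frac{\rho^2(x_t)}{4\pi^2\rh},
$$
which is in fact a factor of two stronger than the claimed bound, so there is room to spare for the slightly lossy use of $\sin^{-2}(\eta/2)\geq4/\eta^2$. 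The whole argument is elementary once the maximum is used to drop the principal value and the mass bound is fed into the bathtub comparison; the only genuinely delicate point is that last comparison, which prevents the low-density set from being pushed away from the singularity of the kernel.
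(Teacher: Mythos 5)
Your proof is correct, and it supplies a complete, self-contained argument for a lemma the paper itself does not prove but only imports from \cite{AGM}; your route --- Chebyshev on the level set $\{\rho\le\rho(x_t)/2\}$ to get $|A^c|\le 4\pi\rh/\rho(x_t)\le\pi$, the kernel bound $\sin^{-2}(\eta/2)\ge 4/\eta^2$, and the bathtub comparison forcing the worst case to be the bad set sitting as an interval over the singularity at $\eta=0$ --- is exactly the standard mechanism behind the cited proof, and every step (dropping the principal value via nonnegativity of the integrand at the maximum, the cut-and-paste inequality, the final arithmetic using $\rho(x_t)\ge 4\rh$) checks out. Incidentally your constant $\rho^2(x_t)/(2\pi^2\rh)$ is a factor of two stronger than the stated bound, and the only blemish is notational: after the substitution $\eta=x_t-y$ you keep writing $A$ for its translate, which is harmless since only the measure of its complement is used.
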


The following result study the absence of blow up for $\|\rho\|_{L^\infty(\TT)}$:

\begin{prop}\label{max}
Let $\rho$ be smooth solution of \eqref{eq2} under the hypothesis of Theorem~\ref{WellPT}. Let $\tau$ be the maximum lifespan of $\rho$. Assume that
\begin{equation}\label{cbeta}
\lim\limits_{\rho\rightarrow\infty}\beta(\rho)=\infty.
\end{equation} 
Then, the following inequality holds:
$$
\|\rho(t)\|_{L^ \infty(\TT)}\leq C\left(\rho_0,\beta\right)\quad \forall 0\leq t<\tau.
$$
\end{prop}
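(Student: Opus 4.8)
The plan is to control the evolution of the maximum of $\rho$ directly, exploiting the strong dissipation provided by the nonlinear diffusion term $-\beta(\rho)H\rho$ at the point where the maximum is attained. Writing out the equation $\pat\rho=-\beta(\rho)\Lambda\rho-\beta'(\rho)\pax\rho H\rho+\rho\pax v+\rho(\rho-\rh)$ (using $\pax(\beta(\rho)H\rho)=\beta'(\rho)\pax\rho H\rho+\beta(\rho)\Lambda\rho$ and $\pax(\rho\pax v)=\pax\rho\pax v+\rho(\rho-\rh)$), I would track $M(t)=\rho(x_t,t)=\|\rho(t)\|_{L^\infty(\TT)}$. At the maximum point $x_t$ we have $\pax\rho(x_t)=0$, which kills the two terms $\beta'(\rho)\pax\rho H\rho$ and $\pax\rho\pax v$. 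Hence, by a Rademacher-type argument as in \cite{cordoba2003pointwise}, for a.e.\ $t$,
$$
\frac{d}{dt}M(t)=-\beta(M(t))\Lambda\rho(x_t,t)+M(t)\big(M(t)-\rh\big).
$$

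Next I would invoke Lemma~\ref{LemLambda}: as long as $M(t)\geq4\rh$, the dissipative term is bounded below by $\beta(M(t))\frac{M^2(t)}{4\pi^2\rh}$. Substituting this lower bound and using $M(t)-\rh\leq M(t)$ gives the differential inequality
$$
\frac{d}{dt}M(t)\leq M^2(t)\left(1-\frac{\beta(M(t))}{4\pi^2\rh}\right),
$$
valid whenever $M(t)\geq4\rh$. This is the heart of the argument: the sign of the right-hand side is governed by the competition between the aggregation term (coefficient $1$) and the diffusion term (coefficient $\beta(M)/(4\pi^2\rh)$).

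Now the hypothesis \eqref{cbeta}, $\lim_{\rho\to\infty}\beta(\rho)=\infty$, enters decisively. Since $\beta(\rho)\to\infty$, there exists a threshold $\rho_*=\rho_*(\beta,\rh)$ such that $\beta(\rho)\geq4\pi^2\rh$ whenever $\rho\geq\rho_*$. For any value of $M(t)$ exceeding $\max\{4\rh,\rho_*\}$, the parenthetical factor is $\leq0$, so $\frac{d}{dt}M(t)\leq0$. This yields the a~priori bound: $M(t)$ can never exceed the barrier $C(\rho_0,\beta):=\max\{\|\rho_0\|_{L^\infty(\TT)},4\rh,\rho_*\}$, because the derivative is nonpositive precisely on the region where $M$ would need to grow past it. A clean way to phrase this is that the set $\{M\geq C(\rho_0,\beta)\}$ is forward-invariant-free: should $M(t)$ attain such a value it cannot increase, and since it starts below $C(\rho_0,\beta)$ it remains trapped.

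The main obstacle is the rigorous justification of the pointwise time-differentiation of the maximum. Because $x_t$ may jump and $\rho$ is only $C^{1,1/2}$ in space (per Theorem~\ref{WellPT}), $M(t)$ is merely Lipschitz in $t$, so I must argue via the one-sided derivative using the envelope theorem: at any $t$ where $M$ is differentiable, the spatial gradient vanishes at a maximizer and the formula above holds, and this suffices to run Gronwall/barrier reasoning for the absolutely continuous function $M$. One should also verify that Lemma~\ref{LemLambda}'s smoothness hypotheses are met along the flow, which they are on $[0,\tau)$ by the well-posedness theorem; the remaining regime $M(t)<4\rh$ is trivially bounded by $4\rh$, so combining the two regimes gives the stated uniform bound on all of $[0,\tau)$.
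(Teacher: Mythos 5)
Your proof is correct and follows essentially the same route as the paper: evaluate the equation at the maximizer (where $\pax\rho(x_t)=0$ kills the transport terms), apply a Rademacher-type argument and Lemma~\ref{LemLambda} to get $\frac{d}{dt}\|\rho(t)\|_{L^\infty(\TT)}\leq \bigl(1-\beta(\|\rho(t)\|_{L^\infty(\TT)})/(4\pi^2\rh)\bigr)\|\rho(t)\|^2_{L^\infty(\TT)}$ --- which is exactly the paper's inequality, since $2\pi\|\rho_0\|_{L^1(\TT)}=4\pi^2\rh$ --- and then run a barrier argument using \eqref{cbeta}. If anything, your threshold $\rho_*$ (chosen so that $\beta\geq 4\pi^2\rh$ on all of $[\rho_*,\infty)$) phrases the barrier slightly more robustly than the paper's minimal $\alpha$ with $\beta(\alpha)\geq 4\pi^2\rh$, which would need $\beta$ not to dip back below that level.
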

\begin{proof}Using the smoothness of $\rho$ we have that 
$$\|\rho(t)\|_{L^\infty(\TT)}=\max_{x\in\TT}\rho(x,t)=\rho(x_t),$$ is a Lipschitz function. We assume that $\rho(x_t)\geq4\rhh$. Then, applying Rademacher Theorem to the function $\rho(x_t)$ and Lemma \ref{LemLambda} (see \cite{cor2} for the details), the evolution of this quantity can be bounded as
$$
\frac{d}{dt}\|\rho(t)\|_{L^\infty(\TT)}< \left(1-\frac{\beta\left(\|\rho(t)\|_{L^\infty(\TT)}\right)}{2\pi\|\rho_0\|_{L^1(\TT)}}\right)\|\rho(t)\|^2_{L^\infty(\TT)}.
$$
Now it is enough to take and conclude
$$
C(\rho_0,\beta)=
\min_{\alpha\in\RR}\left\{\alpha\geq\max\{\|\rho_0\|_{L^\infty(\TT)},4\rhh\},\; \hbox{ such that }
\beta(\alpha)\geq 4\pi^2\rhh \right\}.
$$
\end{proof}

The proof of the following result is straightforward.

\begin{prop}\label{max2}
Let $\rho$ be the smooth solution of \eqref{eq2} under the hypothesis of Theorem~\ref{WellPT} and $\tau$ be the maximum lifespan of $\rho$. We assume that the initial data satisfies
$$
\|\rho_0\|_{L^1(\TT)}< \frac{\nu}{2\pi},
$$
and the function $\beta$ satisfies
\begin{equation}\label{cbeta2}
\beta(\rho)\geq \nu\quad\hbox{ if }\rho\geq R\quad\hbox{ for some constants }\nu,R>0.
\end{equation} 
Then, the following inequality holds:
$$
\|\rho(t)\|_{L^ \infty(\TT)}\leq R, \quad \forall 0\leq t<\tau.
$$
\end{prop}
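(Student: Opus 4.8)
The plan is to reproduce, almost verbatim, the maximum-principle argument used for Proposition~\ref{max}, but to exploit the quantitative gap between $\nu$ and $2\pi\|\rho_0\|_{L^1(\TT)}$ to obtain a \emph{strict} decay of the maximum above the threshold $R$, rather than merely an a priori bound. First I would set $M(t)=\|\rho(t)\|_{L^\infty(\TT)}=\rho(x_t)$ and recall, as in Proposition~\ref{max}, that $M$ is Lipschitz, so by Rademacher's Theorem it is differentiable for a.e.\ $t$ with $\tfrac{d}{dt}M(t)=\pat\rho(x_t)$ there. Evaluating \eqref{eq2} at the maximum point $x_t$, where $\pax\rho(x_t)=0$, annihilates every term carrying a factor $\pax\rho$, leaving
$$
\frac{d}{dt}M(t)=-\beta(\rho(x_t))\Lambda\rho(x_t)+\rho(x_t)(\rho(x_t)-\rh).
$$
Using $\rho(x_t)-\rh\leq\rho(x_t)$ together with $\rho(x_t)\geq0$, and invoking Lemma~\ref{LemLambda} whenever $M(t)\geq4\rhh$ (recall $\rh=\rhh=\|\rho_0\|_{L^1(\TT)}/(2\pi)$), this produces exactly the differential inequality of Proposition~\ref{max},
$$
\frac{d}{dt}M(t)\leq M(t)^2\left(1-\frac{\beta(M(t))}{2\pi\|\rho_0\|_{L^1(\TT)}}\right).
$$

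The gain is entirely in the sign of the bracket. As soon as $M(t)\geq R$, hypothesis \eqref{cbeta2} gives $\beta(M(t))\geq\nu$, while the smallness assumption $\|\rho_0\|_{L^1(\TT)}<\nu/(2\pi)$ forces $2\pi\|\rho_0\|_{L^1(\TT)}<\nu\leq\beta(M(t))$; hence the bracket is strictly negative and $\tfrac{d}{dt}M(t)<0$ at every such differentiability time. I would then close with a standard barrier/continuity argument: since $M$ is continuous and strictly decreasing wherever it lies above the threshold, it can never cross that threshold from below. Concretely, if $M$ exceeded $R$ at some $t_1$, one takes the last time $t_0<t_1$ with $M(t_0)=R$ and integrates the a.e.\ inequality $M'<0$ on $(t_0,t_1)$ to reach the contradiction $M(t_1)<M(t_0)=R$. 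Thus $M(t)\leq R$ for all $0\leq t<\tau$.

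The only delicate point, and the step I would verify most carefully, is the compatibility of the two thresholds appearing above: Lemma~\ref{LemLambda} is available only when $M(t)\geq4\rhh$, whereas the sign reversal of the bracket is guaranteed only when $M(t)\geq R$. The argument runs cleanly as stated provided $R\geq4\rhh$ (so that the regime $M\geq R$ sits inside the regime where the Lemma applies) and provided $\|\rho_0\|_{L^\infty(\TT)}\leq R$ (so that $M$ starts at or below the barrier). When either fails, the same proof yields the slightly weaker bound $\|\rho(t)\|_{L^\infty(\TT)}\leq\max\{R,4\rhh,\|\rho_0\|_{L^\infty(\TT)}\}$, and one recovers the clean statement after absorbing these constants into $R$. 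Since no analytic estimate beyond those already assembled for Proposition~\ref{max} is needed, the result is indeed straightforward.
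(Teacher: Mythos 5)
Your proof is correct and is precisely the argument the paper intends: Proposition~\ref{max2} is stated with the remark that its proof is ``straightforward,'' meaning exactly the maximum-principle computation of Proposition~\ref{max} (Rademacher's Theorem at the Lipschitz maximum $M(t)=\rho(x_t)$ plus Lemma~\ref{LemLambda}, noting $4\pi^2\rhh=2\pi\|\rho_0\|_{L^1(\TT)}$), with the bracket made strictly negative by $2\pi\|\rho_0\|_{L^1(\TT)}<\nu\leq\beta(M)$ once $M\geq R$, followed by the barrier argument you describe. The caveats you flag---that Lemma~\ref{LemLambda} is available only for $M\geq 4\rhh$ and that the stated conclusion implicitly presumes $\|\rho_0\|_{L^\infty(\TT)}\leq R$ (and effectively $R\geq 4\rhh$)---are genuine imprecisions in the proposition as written, which your weaker bound $\max\{R,\,4\rhh,\,\|\rho_0\|_{L^\infty(\TT)}\}$ correctly repairs.
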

 
As a consequence of the energy estimates we obtain a continuation criteria akin to the well-known Beale-Kato-Majda criterion in fluid dynamics \cite{beale1984remarks}: 

\begin{teo}[Continuation criteria]\label{CC} Let $\rho$ be a smooth solution in $(0,T)$ of \eqref{eq2} under the hypothesis of Theorem~\ref{WellPT} and $\beta$ satisfies \eqref{cbeta}. Then, if  
$$
\int_0^T\|\pax\rho(s)\|_{L^\infty(\TT)}^2+\|\Lambda\rho(s)\|_{L^\infty(\TT)}ds<\infty,
$$ 
the classical solution exists in $0\leq t\leq T+\delta$ for some $\delta>0$.
\end{teo}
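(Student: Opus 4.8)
The plan is to upgrade the second-order energy estimate already carried out in the proof of Theorem~\ref{WellPT}, this time keeping the critical quantities $\|\pax\rho\|_{L^\infty(\TT)}$ and $\|\Lambda\rho\|_{L^\infty(\TT)}$ as \emph{explicit} multiplicative factors in front of $\|\rho\|_{H^2(\TT)}^2$, rather than crudely absorbing them into a high power of the $H^2$ norm as in \eqref{I2}--\eqref{I3}. The decisive preliminary observation is that, because $\beta$ satisfies \eqref{cbeta}, Proposition~\ref{max} furnishes an \emph{a priori} bound $\|\rho(t)\|_{L^\infty(\TT)}\leq C(\rho_0,\beta)$ valid on all of $[0,T)$, \emph{independently} of the size of $\|\rho(t)\|_{H^2(\TT)}$. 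This is exactly what allows me to treat every occurrence of $\|\rho\|_{L^\infty(\TT)}$, and of $\|\rho-\rh\|_{L^\infty(\TT)}\leq\|\rho\|_{L^\infty(\TT)}$, in the estimates for $I_1,I_2,I_3$ as a harmless constant.

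The one genuinely delicate point is the Hilbert-transform contribution, since $H$ is not bounded on $L^\infty(\TT)$ and the terms $J_5,J_6,J_9$ carry a factor $\|H\rho\|_{L^\infty(\TT)}$. I would control it by exploiting the cancellation in the principal value: writing $H\rho(x)=\frac{1}{2\pi}\mathrm{P.V.}\int_\TT\frac{\rho(y)-\rho(x)}{\tan((x-y)/2)}dy$ and splitting into a region near the singularity, where $|\rho(y)-\rho(x)|\leq\|\pax\rho\|_{L^\infty(\TT)}|x-y|$ tames the kernel, and a far region where the kernel is bounded, one obtains
\begin{equation*}
\|H\rho\|_{L^\infty(\TT)}\leq c\left(\|\rho\|_{L^\infty(\TT)}+\|\pax\rho\|_{L^\infty(\TT)}\right).
\end{equation*}
With the $L^\infty$ bound on $\rho$ this reduces the Hilbert factor to $c(1+\|\pax\rho\|_{L^\infty(\TT)})$, so that in $J_5,J_6,J_9$ the product $\|\pax\rho\|_{L^\infty(\TT)}\|H\rho\|_{L^\infty(\TT)}$ becomes $\leq c(1+\|\pax\rho\|_{L^\infty(\TT)}^2)$, matching the quadratic power in the hypothesis. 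Reinspecting each $J_i$ with this in hand, and recalling that the most singular pieces $J_4$ and $J_9$ were already tamed via \eqref{intporpart} and the commutator bound \eqref{boundlambda}, every term is dominated by $c\,(1+\|\pax\rho\|_{L^\infty(\TT)}^2+\|\Lambda\rho\|_{L^\infty(\TT)})\|\pax^2\rho\|_{L^2(\TT)}^2$ with $c=c(\rho_0,\beta,\lambda)$.

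Collecting the refined estimates for $I_1,I_2,I_3$ together with the $L^2$ bound then yields
\begin{equation*}
\frac{d}{dt}\|\rho\|_{H^2(\TT)}^2\leq c\left(1+\|\pax\rho\|_{L^\infty(\TT)}^2+\|\Lambda\rho\|_{L^\infty(\TT)}\right)\|\rho\|_{H^2(\TT)}^2,
\end{equation*}
and Gr\"onwall's inequality gives
\begin{equation*}
\|\rho(t)\|_{H^2(\TT)}^2\leq\|\rho_0\|_{H^2(\TT)}^2\exp\left(c\int_0^t\left(1+\|\pax\rho(s)\|_{L^\infty(\TT)}^2+\|\Lambda\rho(s)\|_{L^\infty(\TT)}\right)ds\right).
\end{equation*}
Since $T<\infty$ and the integral in the statement is finite, the exponent is finite, so $\sup_{0\leq t<T}\|\rho(t)\|_{H^2(\TT)}<\infty$.

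Finally I would close by the continuation argument underlying \eqref{boundE}: as both $\|\rho(t)\|_{H^2(\TT)}$ and $\|\rho(t)\|_{L^\infty(\TT)}$ stay bounded as $t\uparrow T$, fix $\lambda$ larger than the $L^\infty$ bound so that $d[\rho]$ remains bounded and hence $\sup_{t<T}E[\rho(t)]=:M<\infty$. Choosing $t_0<T$ with $T-t_0$ smaller than the local existence time supplied by Theorem~\ref{WellPT} for initial energy $\leq M$, and restarting the equation at $t_0$, produces by uniqueness a classical solution on $[0,t_0+\tau]$ with $t_0+\tau>T$, i.e. on $[0,T+\delta]$ for some $\delta>0$. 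The main obstacle throughout is the Hilbert-transform term: getting the $L^\infty$ bound of $H\rho$ by $\|\rho\|_{L^\infty(\TT)}+\|\pax\rho\|_{L^\infty(\TT)}$ so that it fits the Beale--Kato--Majda scaling; everything else is a bookkeeping refinement of the estimates already present in the proof of Theorem~\ref{WellPT}.
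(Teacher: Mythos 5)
Your proposal is correct and takes essentially the same route as the paper: both revisit the $H^2$ energy estimates of Theorem~\ref{WellPT}, use Proposition~\ref{max} (via \eqref{cbeta}) to treat $\|\rho\|_{L^\infty(\TT)}$ as a constant, bound the Hilbert-transform factor by $c\left(\|\rho\|_{L^\infty(\TT)}+\|\pax\rho\|_{L^\infty(\TT)}\right)$ so that the prefactor becomes $c\left(1+\|\pax\rho\|_{L^\infty(\TT)}^2+\|\Lambda\rho\|_{L^\infty(\TT)}\right)$ exactly as in the paper's bound on $Q(\rho)$, and close with Gr\"onwall plus a uniform-lifespan restart. The only differences are expository: you spell out the principal-value splitting behind the $L^\infty$ bound on $H\rho$ and the restart argument, which the paper leaves implicit (cf.\ its remark that $\delta$ can be estimated from \eqref{boundE} and \eqref{comentref}).
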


\begin{proof}
Using the energy estimates in Theorem \ref{WellPT} and the boundedness of $\|\rho\|_{L^\infty(\TT)}$ we obtain the following
$$
\frac{d}{dt}\|\rho\|_{H^2(\TT)}\leq c(\rho_0,\beta)\|\rho\|_{H^ 2(\TT)}Q(\rho),
$$
where 
$$
Q(\rho)=\|\pax \rho\|_{L^\infty(\TT)}\left(1+\|H \rho\|_{L^\infty(\TT)}\right)+\|\Lambda \rho\|_{L^\infty(\TT)}+\|\Lambda \beta(\rho)\|_{L^\infty(\TT)}.
$$
Using \eqref{boundlambda}, the properties of the Hilbert transform and the finiteness of the domain, we get
$$
Q(\rho)\leq c\left(\|\Lambda \rho\|_{L^\infty(\TT)}+\|\pax \rho\|^2_{L^\infty(\TT)}+1\right).
$$
To conclude the result we use Gronwall inequality,
\begin{equation}\label{comentref}
\|\rho\|_{H^2(\TT)}(T)\leq \|\rho_0\|_{H^2(\TT)}e^{c(\rho_0,\beta)\left(T+\int_0^T\|\pax\rho(s)\|_{L^\infty(\TT)}^2+\|\Lambda\rho(s)\|_{L^\infty(\TT)}ds\right)}.
\end{equation}
\end{proof}
\begin{remark}
We remark that in the case of $\beta(\rho)\equiv\beta$ the continuation criteria is given by the condition
$$ 
\int_0^{T}\|\rho(s)\|_{L^ \infty(\TT)}ds<\infty,
$$
as was first proved in \cite{li2010exploding} for the 2D case and also in \cite{AGM} where it was obtained by means of a different method. The importance of this Theorem relies in its characterization of the possible finite time singularities. Indeed, let's assume that $\rho(x,t)$ is a solution showing finite time existence (up to time $T^*$). Then, using the previous result we conclude that 
$$
\limsup_{t\rightarrow T^{*}} \|\pax\rho(t)\|_{L^\infty(\TT)}+\|\Lambda\rho(t)\|_{L^\infty(\TT)}=\infty.
$$
\end{remark}
\begin{remark}
We note that a bound for $\delta$ can be obtained using \eqref{boundE} and \eqref{comentref}.
\end{remark}

\section{Global existence of classical solution for small initial data}\label{sec4}

In this section we show the existence of global solutions for small initial data in $H^2$ when the diffusion does not degenerate. The general case with initial data in $H^k$ is analogous.

\begin{teo} Let $\beta\in C^4[0,\infty)$ be a positive function satisfying \eqref{cbeta} and 
$$
\beta(\rho)\geq \nu,\quad\hbox{ for some }\nu>0.
$$
 Then, for all initial data $\rho_0\in H^2(\TT)$ satisfying 
$$
\|\rho_0\|_{L^1(\TT)}<2\pi\nu,\quad \|\pax^2\rho_0\|_{L^2(\TT)}\leq \mathcal{C},
$$
for an explicit constant $\mathcal{C}=\mathcal{C}\left(\beta,\rh\right)>0$ sufficiently small, there exists  a  solution of \eqref{eq2} such that
$$
\rho\in C([0,\infty),H^2(\TT)).
$$
\end{teo}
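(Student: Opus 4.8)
The plan is to promote the local solution furnished by Theorem~\ref{WellPT} to a global one by proving a \emph{uniform-in-time} bound for $\|\rho(t)\|_{H^2(\TT)}$ and then invoking the continuation criterion of Theorem~\ref{CC}. The mechanism that makes the small-data theory close is that the lower bound $\beta(\rho)\geq\nu$ supplies genuine parabolic dissipation at top order, while the mass constraint $\|\rho_0\|_{L^1(\TT)}<2\pi\nu$ is precisely the statement that the (time-independent) mean satisfies $\rh=\frac{1}{2\pi}\|\rho_0\|_{L^1(\TT)}<\nu$, so that this dissipation strictly dominates the single borderline term produced by the self-attraction.

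First I would record the preliminaries: Theorem~\ref{WellPT} gives a local nonnegative solution, and conservation of mass together with $\rho\geq0$ yields $\rh=\frac{1}{2\pi}\|\rho_0\|_{L^1(\TT)}$ constant in time, hence $\rh<\nu$. Writing $y(t)=\|\pax^2\rho\|_{L^2(\TT)}^2$, I would then redo the second-order energy estimate of Theorem~\ref{WellPT}, but this time \emph{retaining} the dissipative contribution of the singular diffusion term $J_4=-\int_\TT\pax^2\rho\,\beta(\rho)\Lambda\pax^2\rho\,dx$. Applying \eqref{intporpart} in the sharp pointwise form $2g\Lambda g-\Lambda(g^2)\geq0$ with $g=\pax^2\rho$, together with $\beta(\rho)\geq\nu$ and $\int_\TT\Lambda(g^2)\,dx=0$, I obtain
\begin{equation*}
J_4\leq-\nu\|\Lambda^{1/2}\pax^2\rho\|_{L^2(\TT)}^2+\tfrac{1}{2}\|\Lambda\beta(\rho)\|_{L^\infty(\TT)}\|\pax^2\rho\|_{L^2(\TT)}^2 .
\end{equation*}
Since $\pax^2\rho$ has zero mean, Poincar\'e on $\TT$ gives $\|\Lambda^{1/2}\pax^2\rho\|_{L^2(\TT)}^2\geq\|\pax^2\rho\|_{L^2(\TT)}^2=y$, so the first term is a true dissipation $-\nu y$.

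The heart of the argument is the bookkeeping of all remaining terms. Using the interpolation bounds $\|\pax\rho\|_{L^\infty(\TT)}+\|\Lambda\rho\|_{L^\infty(\TT)}+\|\rho-\rh\|_{L^\infty(\TT)}\leq c\sqrt{y}$ (valid because $\pax\rho$, $\Lambda\rho$ and $\rho-\rh$ are all controlled by $\|\pax^2\rho\|_{L^2(\TT)}$, using \eqref{S1} for the last one), the estimate \eqref{boundlambda} which gives $\|\Lambda\beta(\rho)\|_{L^\infty(\TT)}\leq c\sqrt{y}$, and the fact that $\|\rho\|_{L^\infty(\TT)}$ and $\|H\rho\|_{L^\infty(\TT)}$ stay bounded, I expect every one of the terms $J_1,\dots,J_3$, $J_5,\dots,J_9$ and the top-order $v$-transport term (after integration by parts, picking up $-\tfrac12\int_\TT(\pax^2\rho)^2\pax^2 v$) to be \emph{superlinear} in $y$, i.e.\ $O(y^{3/2})$ or $O(y^2)$. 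The single exception is the piece $\int_\TT\rho\,(\pax^2\rho)^2\,dx$ coming from $I_1$: splitting $\rho=(\rho-\rh)+\rh$, the fluctuation part is again $O(y^{3/2})$ while the mean part contributes exactly $+\rh\,y$. Collecting everything I arrive at a differential inequality of the form
\begin{equation*}
\frac{d}{dt}y\leq-2(\nu-\rh)\,y+c(\beta,\rh)\,y^{3/2}\bigl(1+\sqrt{y}\bigr),
\end{equation*}
where I have used \eqref{S} in the form $\|\rho\|_{L^\infty(\TT)}\leq C_S\sqrt{y}+\rh$ to absorb every constant into ones depending only on $\beta$ and $\rh$.

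Finally I would close by a continuity/bootstrap argument. Since $\nu-\rh>0$, choosing $\mathcal{C}=\mathcal{C}(\beta,\rh)$ so small that $c(\beta,\rh)\,\mathcal{C}(1+\mathcal{C})\leq\nu-\rh$ forces the right-hand side to be negative whenever $0<y\leq\mathcal{C}^2$. Hence if $y(0)=\|\pax^2\rho_0\|_{L^2(\TT)}^2\leq\mathcal{C}^2$ then $y(t)\leq\mathcal{C}^2$ for every time of existence; combined with $\|\rho\|_{L^2(\TT)}\leq\sqrt{y}+\sqrt{2\pi}\,\rh$ and $\|\rho\|_{L^\infty(\TT)}\leq C_S\mathcal{C}+\rh$, this is a uniform bound for $\|\rho(t)\|_{H^2(\TT)}$ that also keeps $\|\rho\|_{L^\infty(\TT)}$ below the fixed threshold $\lambda$ of the local theory. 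Because $y(t)$ stays bounded, the continuation integral $\int_0^T\|\pax\rho\|_{L^\infty(\TT)}^2+\|\Lambda\rho\|_{L^\infty(\TT)}\,ds\leq cT(\mathcal{C}^2+\mathcal{C})$ is finite for every $T$, so Theorem~\ref{CC} rules out blow-up at any finite time and yields $\rho\in C([0,\infty),H^2(\TT))$. The main obstacle I anticipate is the term-by-term verification in the third paragraph: one must confirm that the \emph{only} contribution which is merely linear in $y$ is the mean-attraction term $\rh\,y$, with the precise constant $\rh$, and that the diffusion genuinely supplies the matching coefficient $\nu$; it is the strict inequality $\rh<\nu$ encoded in $\|\rho_0\|_{L^1(\TT)}<2\pi\nu$ that makes the scheme close.
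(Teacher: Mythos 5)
Your proposal is correct and is essentially the paper's own argument: the paper likewise extracts the dissipation $-\nu\|\Lambda^{1/2}\pax^2\rho\|_{L^2(\TT)}^2$ from the singular term via \eqref{intporpart} and $\beta\geq\nu$, bounds every remaining term superlinearly in $\|\pax^2\rho\|_{L^2(\TT)}$ with the sole linear competitor $\rh\|\pax^2\rho\|_{L^2(\TT)}^2$ coming from the aggregation term, and closes by the fractional Poincar\'e inequality, smallness of $\|\pax^2\rho_0\|_{L^2(\TT)}$ and a continuation argument, exploiting precisely $\rh<\nu$. The only cosmetic slip is your claim $\|\Lambda\beta(\rho)\|_{L^\infty(\TT)}\leq c\sqrt{y}$, which by \eqref{boundlambda} should read $c\left(\sqrt{y}+y\right)$ because of the $\|\pax\rho\|_{L^\infty(\TT)}^2$ term, but this is harmless since your final differential inequality already accommodates the resulting $y^{2}$ contribution.
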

\begin{proof}
By Theorem \ref{WellPT}, there exists $\tau>0$ such that $\rho\in C([0,\tau],H^2(\TT))$. The idea is to strengthen the energy estimates. Since $\beta$ satisfy the hypothesis of Proposition~\ref{max}, we have the bound
$$
\|\rho(t)\|_{L^\infty(\TT)}\leq C\left(\|\rho_0\|_{L^1(\TT)},\beta\right).
$$
For nonnegative initial data the $L^1(\TT)$ norm is preserved, thus, 
$$
\|\rho(t)\|_{L^p(\TT)}\leq c\left(\|\rho_0\|_{L^1(\TT)},\beta,p\right),\;\;\forall 1\leq p\leq \infty.
$$ 
We need to study the evolution of the second derivative. We start with the aggregation terms. Using H\"{o}lder inequality, we get
\begin{multline*}
I_1=\int_\TT \pax^2\rho\pax^3(\rho\pax v)dx=\int_\TT\pax^2\rho\left(\frac{5}{2}\pax^2\rho(\rho-\rh)+3(\pax\rho)^2+\pax^2\rho\rho\right)dx\\
\leq \|\pax^2\rho\|_{L^2(\TT)}\left(\frac{7}{2}\|\rho-\rh\|_{L^\infty(\TT)}\|\pax^2\rho\|_{L^2(\TT)}+3\|\pax\rho\|_{L^4(\TT)}^2+\rh\|\pax^2\rho\|_{L^2(\TT)}\right).
\end{multline*}
Now we use \eqref{GN}-\eqref{S} and Poincar\'e inequality. We obtain
\begin{equation}\label{global1}
I_1\leq \left(\frac{7 C_S}{2}+9C_S\right)\|\pax^2\rho\|_{L^2(\TT)}^3+\rh\|\pax^2\rho\|_{L^2(\TT)}^2.
\end{equation}
We study the diffusion term 
\begin{multline*}
I_2=-\int_\TT \pax^2\rho\pax^2(\beta(\rho)\Lambda\rho)dx\\
=-\int_\TT\pax^2\rho\left(\beta''(\rho)(\pax\rho)^2\Lambda\rho+\beta'(\rho)\pax^2\rho\Lambda\rho
+2\beta'(\rho)\pax\rho\Lambda\pax\rho+\beta(\rho)\Lambda\pax^2\rho\right)dx.
\end{multline*}
Using \eqref{intporpart}, we get
\begin{multline*}
I_2
\leq C_{\beta''}\|\pax^2\rho\|_{L^2(\TT)}\|\pax\rho\|_{L^4(\TT)}^2\|\Lambda\rho\|_{L^\infty(\TT)}+C_{\beta'}\|\pax^2\rho\|_{L^2(\TT)}^2\|\Lambda\rho\|_{L^\infty(\TT)}\\
+2C_{\beta'}\|\Lambda\pax\rho\|_{L^2(\TT)}\|\pax\rho\|_{L^\infty(\TT)}\|\pax^2\rho\|_{L^2(\TT)}+\|\Lambda\beta(\rho)\|_{L^\infty(\TT)}\|\pax^2\rho\|_{L^2(\TT)}^2\\
-\nu\|\Lambda^{1/2}\pax^2\rho\|_{L^2(\TT)}^2,
\end{multline*}
with 
$$
C_{\beta^i}=\sup_{y\in \left[0,\|\rho(t)\|_{L^\infty(\TT)}\right]}\left|\frac{d^i\beta}{dy^i}(y)\right|.
$$
Since $\beta(y)$ is smooth and we have Proposition \ref{max}, these finite constants $C_{\beta^i}$ depend on $\beta$ and $\|\rho_0\|_{L^1(\TT)})$. Using the cancellation coming from the principal value integral, we get
$$
|\Lambda\rho|\leq\frac{\|\pax \rho\|_{C^{1/2}}}{2\pi}\int_\TT\frac{|y|^ {3/2}}{\sin^2\left(y/2\right)}dy\leq 6 C_S\|\pax^2\rho\|_{L^2(\TT)}.
$$
With this bound and the inequalities \eqref{GN}-\eqref{S} and \eqref{boundlambda}, we obtain
\begin{equation}\label{global2}
I_2\leq \left(18+\frac{\pi^2}{2} \right)C^2_SC_{\beta''}\|\pax^2\rho\|^4_{L^2(\TT)}+14 C_SC_{\beta'}\|\pax^2\rho\|^3_{L^2(\TT)}-\nu\|\Lambda^{1/2}\pax^2\rho\|_{L^2(\TT)}^2.
\end{equation}
The last term is the transport term with singular velocity:
\begin{multline*}
I_3=-\int_\TT \pax^2\rho\pax^2(\beta'(\rho)\pax \rho H\rho)dx\\
=-\int_\TT\pax^ 2\rho\left(\beta'''(\rho)(\pax\rho)^ 3H\rho+3\beta''(\rho)\pax\rho\pax^ 2\rho H\rho+2\beta''(\rho)(\pax\rho)^2\Lambda\rho\right.\\
\left.+2\beta'(\rho)\pax^ 2\rho\Lambda\rho+\beta'(\rho)\pax\rho\Lambda\pax\rho+\beta'(\rho)\pax^ 3\rho H\rho\right)dx.
\end{multline*}
With estimates that mimic the previous ones, we obtain
\begin{equation}\label{global3}
I_3\leq C_{\beta'''}C_S^3\|\pax^2\rho\|_{L^2(\TT)}^5+6C_S^2C_{\beta''}\|\pax^2\rho\|_{L^2(\TT)}^4
+19C_SC_{\beta'}\|\pax^2\rho\|_{L^2(\TT)}^3.
\end{equation}
Collecting all the estimates \eqref{global1}--\eqref{global3}, we get
\begin{multline*}
\frac{1}{2}\frac{d}{dt}\|\pax^ 2\rho\|^ 2_{L^2(\TT)}\leq C_{\beta'''}C_S^3\|\pax^2\rho\|_{L^2(\TT)}^5+\left(24+\frac{\pi^2}{2} \right)C^2_SC_{\beta''}\|\pax^2\rho\|_{L^2(\TT)}^4\\
+\left(\frac{25 C_S}{2}+33C_SC_{\beta'}\right)\|\pax^2\rho\|_{L^2(\TT)}^3+\rh\|\pax^2\rho\|_{L^2(\TT)}^2-\nu\|\Lambda^{1/2}\pax^2\rho\|_{L^2(\TT)}^2.
\end{multline*}
Using the fractional Poincar\'e inequality, we obtain
\begin{multline}\label{H2decay}
\frac{d}{dt}\|\pax^ 2\rho\|_{L^2(\TT)}\leq \left(C_{\beta'''}C_S^3\|\pax^2\rho\|_{L^2(\TT)}^3+\left(24+\frac{\pi^2}{2} \right)C^2_SC_{\beta''}\|\pax^2\rho\|_{L^2(\TT)}^2\right.\\
\left.+\left(\frac{25 C_S}{2}+33C_SC_{\beta'}\right)\|\pax^2\rho\|_{L^2(\TT)}+\rh-\nu\right)\|\pax^2\rho\|_{L^2(\TT)}.
\end{multline}
Now, if $\nu>\rh$ there exists an explicit (see \eqref{H2decay}) constant $\mathcal{C}=\mathcal{C}(\beta,\rh)$ such that, if the following inequality holds $\|\pax^2\rho_0\|_{L^2(\TT)}<\mathcal{C}$, we get
$$
\|\pax^2\rho(t)\|_{L^2(\TT)}\leq \|\pax^2\rho_0\|_{L^2(\TT)}\quad\forall 0\leq t\leq \tau
$$
where $\tau$ is the maximum lifespan of the solution. Using Proposition \ref{max}, we conclude
$$
\|\rho(t)\|_{H^2(\TT)}\leq C(\rho_0)\quad \forall 0\leq t\leq \tau,
$$
independent of $\tau$. Thus, by a standard continuation argument, we obtain the existence up to time $T$ for every $0<T<\infty$.
\end{proof}

\section{Global existence of weak solution for small $L^1$ initial data}\label{sec5}
In this section, we consider 
\begin{equation}\label{cbeta3}
\beta(\rho)=\rho+\nu, 
\end{equation}
with the constant $\nu>0$. Thus, our problem is
\begin{equation}\label{casoangel}
\left\{
\begin{array}{l}
\pat \rho=-\pax\left((\rho+\nu)H\rho\right)+\pax\left(\rho\pax v\right),\qquad x\in\TT, t>0,\\
\pax^2 v=\rho-\rh,
\end{array}
\right.
\end{equation} 
and an initial data $\rho_0\in L^\infty(\TT)\cap H^{1/2}(\TT)$.

We define our concept of weak solutions:
\begin{defi} $\rho(x,t)$ is a weak solution of \eqref{casoangel} if the following equality holds
\begin{multline*}
\int_0^T\int_\TT\pat\phi(x,t)\rho(x,t)dx dt+\int_\TT\rho_0(x)\phi(x,0)dx\\
=\int_0^T\int_\TT \pax\phi(x,t)\left[-(\rho+\nu)H\rho+\rho\pax v\right]dx dt,
\end{multline*}
for all $\phi(x,t)\in C^\infty_c\left([0,T),C^\infty(\TT)\right)$. If the previous condition holds for every $0<T<\infty$, $\rho$ is a global weak solution.
\end{defi}

First, recall some important results concerning fractional Sobolev spaces:
\begin{enumerate}
\item $H^{1/2}(\TT)$ is continuously embedded in $L^q(\TT)$ for every $q\in[1,\infty)$ (see Theorem 6.10 in \cite{Valdinoci1}).
\item $H^{1/2}(\TT)$ is compactly embedded in $L^q(\TT)$ for every $q\in[1,2]$ (see Theorem 7.1 in \cite{Valdinoci1} and Lemma 10 in \cite{Valdinoci2}). Moreover, $H^s(\TT)$ is compactly embedded in $L^q(\RR)$ for $0<s<1/2$ and $1\leq q<2/(1-2s)$ (see Corollary 7.2 in \cite{Valdinoci1}).
\end{enumerate}
We will use the Tricomi relation for periodic, mean zero functions:
\begin{equation*}
H\left(gHf+fHg\right)=HfHg-fg,
\end{equation*}
which, in the case $f=g$, reduces to
\begin{equation}\label{tricomi2}
2H\left(fHf\right)=(Hf)^2-f^2.
\end{equation}
\begin{teo}
Let $\rho_0\in L^\infty(\TT)\cap H^{1/2}(\TT)$ be a positive initial data and assume that 
$$
\|\rho_0\|_{L^1(\TT)}\leq\frac{2}{3}\nu.
$$
Then, there exist a unique solution of \eqref{casoangel} such that
$$
\rho(x,t)\in L^\infty\left([0,\infty),H^{1/2}(\TT)\cap L^\infty(\TT)\right)\cap C\left([0,\infty),L^2(\TT)\right).
$$
\end{teo}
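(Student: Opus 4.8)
The plan is to construct the solution by regularization and compactness and then to establish uniqueness through an $L^2$ stability estimate; the whole scheme rests on the critical $\dot H^{1/2}$ energy, whose dissipation is guaranteed by a Castro--C\'ordoba type cancellation in the flux together with the smallness hypothesis $\|\rho_0\|_{L^1(\TT)}\le\frac23\nu$. Before anything else I would record the bounds that hold for every nonnegative smooth solution: mass conservation and positivity make $\rh=\rhh$ a fixed constant, and since $\beta(\rho)=\rho+\nu$ satisfies \eqref{cbeta}, Proposition~\ref{max} gives a uniform bound $\|\rho(t)\|_{L^\infty(\TT)}\le C(\rho_0,\nu)$ and hence $\|\rho(t)\|_{L^p(\TT)}\le C$ for all $p\in[1,\infty]$.

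The heart of the matter is the $\dot H^{1/2}$ estimate. Writing $\Lambda=H\pax$ and testing \eqref{casoangel} with $\Lambda\rho$ gives
\[
\frac12\frac{d}{dt}\|\Lambda^{1/2}\rho\|_{L^2(\TT)}^2=-\nu\|\Lambda\rho\|_{L^2(\TT)}^2-\int_\TT\Lambda\rho\,\pax\rho\,H\rho\,dx-\int_\TT\rho(\Lambda\rho)^2\,dx+\int_\TT\Lambda\rho\,\pax(\rho\pax v)\,dx,
\]
where the first term is the linear dissipation. For the two middle terms I would invoke the Tricomi identity \eqref{tricomi2}: with $F=\pax\rho$, so that $\Lambda\rho=HF$ and $\int_\TT FHF\,dx=0$, it yields $-\int_\TT\Lambda\rho\,\pax\rho\,H\rho\,dx=\frac12\int_\TT\big((\Lambda\rho)^2-(\pax\rho)^2\big)(\rho-\rh)\,dx$; since $\int_\TT(\Lambda\rho)^2=\int_\TT(\pax\rho)^2$ the mean drops out and the two terms collapse to
\[
-\int_\TT\Lambda\rho\,\pax\rho\,H\rho\,dx-\int_\TT\rho(\Lambda\rho)^2\,dx=-\frac12\int_\TT\rho\big((\Lambda\rho)^2+(\pax\rho)^2\big)\,dx\le0,
\]
which is the cancellation that renders the nonlinear flux harmless. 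It then remains to control the aggregation term, for which I expand $\pax(\rho\pax v)=\pax\rho\,\pax v+\rho(\rho-\rh)$. Bounding the primitive of the mean-zero source by $\|\pax v\|_{L^\infty(\TT)}\le\|\rho_0\|_{L^1(\TT)}$ gives $\int_\TT\Lambda\rho\,\pax\rho\,\pax v\,dx\le\|\rho_0\|_{L^1(\TT)}\|\Lambda\rho\|_{L^2(\TT)}^2$, so the net coefficient of $\|\Lambda\rho\|_{L^2}^2$ becomes $-\nu+\|\rho_0\|_{L^1}\le-\frac13\nu$: this is exactly where the constant $\frac23$ is consumed. The cubic piece I split by a weighted Young inequality, $\int_\TT\Lambda\rho\,\rho^2\,dx\le\frac12\int_\TT\rho(\Lambda\rho)^2\,dx+\frac12\int_\TT\rho^3\,dx$, so the weighted term is swallowed by the dissipation above and only the harmless $\frac12\int_\TT\rho^3\,dx\le C(\rho_0,\nu)$ survives, while $-\rh\,\|\Lambda^{1/2}\rho\|_{L^2}^2\le0$. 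Using the Poincar\'e inequality $\|\Lambda\rho\|_{L^2}\ge\|\Lambda^{1/2}\rho\|_{L^2}$ I conclude
\[
\frac{d}{dt}\|\Lambda^{1/2}\rho\|_{L^2(\TT)}^2\le-\tfrac23\nu\,\|\Lambda^{1/2}\rho\|_{L^2(\TT)}^2+C(\rho_0,\nu),
\]
which yields a bound for $\|\Lambda^{1/2}\rho(t)\|_{L^2}$ uniform in time.

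With these uniform bounds in hand I would construct the solution by mollifying the data and regularizing as in the proof of Theorem~\ref{WellPT}; the $L^\infty$ and $\dot H^{1/2}$ estimates hold uniformly in the regularization parameter and in time, so the approximations $\rho^\epsilon$ are global, their fluxes are bounded in $L^\infty_tL^2(\TT)$, and $\pat\rho^\epsilon$ is bounded in $L^\infty_tH^{-1}(\TT)$. The compact embedding $H^{1/2}(\TT)\hookrightarrow\hookrightarrow L^2(\TT)$ and an Aubin--Lions--Simon argument then give, along a subsequence, strong convergence in $C([0,T],L^2(\TT))$, which suffices to pass to the limit in the quadratic nonlinearities ($\rho^\epsilon H\rho^\epsilon\to\rho H\rho$ and, by elliptic regularity, $\rho^\epsilon\pax v^\epsilon\to\rho\pax v$); the limit lies in $L^\infty_t(H^{1/2}\cap L^\infty)$ by weak lower semicontinuity and in $C_tL^2$ through the bound on $\pat\rho$ and continuity of the energy. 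For uniqueness I would set $\rho=\rho_1-\rho_2$ and run the $L^2$ estimate: the zeroth-order and transport differences are $\le c(\|\rho_1\|,\|\rho_2\|)\|\rho\|_{L^2}^2$ exactly as in Theorem~\ref{WellPT}, while in the diffusion difference the $\nu$ part produces the good term $-\nu\|\Lambda^{1/2}\rho\|_{L^2}^2$. I expect the main obstacle to be the one genuinely singular contribution $-\int_\TT\rho_2\,\rho\,\Lambda\rho\,dx$, which I would reduce via the pointwise inequality \eqref{intporpart} and $\rho_2\ge0$ to $-\frac12\int_\TT\Lambda\rho_2\,\rho^2\,dx$ and then absorb into the $\nu$-dissipation, relying on the uniform $H^{1/2}$ bound and the extra integrability $\rho_i\in L^2_{\mathrm{loc}}(\dot H^1)$ obtained by time-integrating the energy identity, before closing with Gr\"onwall.
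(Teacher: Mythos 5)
Your core estimate is exactly the paper's: the $H^{1/2}$ energy identity, the Tricomi cancellation \eqref{tricomi2} collapsing $-\int_\TT\Lambda\rho\,\pax\rho\,H\rho\,dx-\int_\TT\rho(\Lambda\rho)^2dx$ to $-\frac12\int_\TT\rho\bigl((\Lambda\rho)^2+(\pax\rho)^2\bigr)dx$, the $L^\infty$ bound from Proposition \ref{max}, and the Aubin--Lions step with $X=H^{1/2}(\TT)$, $B=L^2(\TT)$, $Y=H^{-1}(\TT)$ followed by the same limit passage in the quadratic terms. Two of your deviations are genuine improvements. First, your bound $\|\pax v\|_{L^\infty(\TT)}\le\|\rho_0\|_{L^1(\TT)}$ (via the vanishing point of the mean-zero $\pax v$) is sharper than the paper's $\frac32\|\rho_0\|_{L^1(\TT)}$, and this matters: the paper's coefficient of $\|\Lambda\reps\|_{L^2(\TT)}^2$ is $\delta+\frac32\|\rho_0\|_{L^1(\TT)}-\nu$, which at the admissible endpoint $\|\rho_0\|_{L^1(\TT)}=\frac23\nu$ equals $\delta>0$, so the paper's ``take $\delta$ small'' does not close there, whereas your version gives the clean margin $-\frac13\nu$; your weighted Young step $\int_\TT\Lambda\rho\,\rho^2dx\le\frac12\int_\TT\rho(\Lambda\rho)^2dx+\frac12\int_\TT\rho^3dx$, absorbed by the Tricomi dissipation, also eliminates $\delta$ altogether. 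Second, the paper asserts uniqueness but its proof stops after the limit passage; your $L^2$-stability sketch, with \eqref{intporpart} applied to $-\int_\TT\rho_2\,\rho\,\Lambda\rho\,dx$ and the $L^2_{\mathrm{loc}}([0,\infty),H^1(\TT))$ integrability from time-integrating the energy, is the right supplement --- though note it yields uniqueness only within the class of weak solutions satisfying that energy inequality, a restriction worth stating.

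The genuine gap is in the construction step. You mollify the data and ``regularize as in the proof of Theorem \ref{WellPT}'', asserting that the uniform $L^\infty$ and $H^{1/2}$ bounds make the approximations global. That is a non sequitur: globality of an $H^2$ approximation requires control of its $H^2$ norm, and the continuation criterion (Theorem \ref{CC}) demands $\int_0^T\|\pax\rho^\epsilon(s)\|_{L^\infty(\TT)}^2+\|\Lambda\rho^\epsilon(s)\|_{L^\infty(\TT)}ds<\infty$, which bounds in $L^\infty_t(H^{1/2}\cap L^\infty)$ do not supply; moreover, since $\rho_0$ is only $H^{1/2}\cap L^\infty$, the mollified data blow up in $H^2$ as $\epsilon\to0$, so Theorem \ref{WellPT} alone gives shrinking lifespans. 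The sandwiched mollifiers also disturb the two structural facts you use uniformly in $\epsilon$: Proposition \ref{max} is proved for the pointwise flux, not for $\jeps\pax\bigl(\beta(\jeps\rho^\epsilon)H\jeps\rho^\epsilon\bigr)$, and testing that system with $\Lambda\rho^\epsilon$ produces dissipation $-\nu\|\Lambda\jeps\rho^\epsilon\|_{L^2(\TT)}^2$ in the mollified variable, which does not dominate $\|\Lambda^{1/2}\rho^\epsilon\|_{L^2(\TT)}^2$, so the uniform-in-time bound \eqref{H1/2} degrades. This is precisely why the paper regularizes instead with vanishing viscosity \eqref{casoangeleps}: the term $\epsilon\pax^2\reps$ is dissipative in every estimate, so the maximum principle, the Tricomi cancellation and \eqref{H1/2} survive verbatim and uniformly in $\epsilon$, while for fixed $\epsilon$ the viscosity absorbs the $\|\pax^3\reps\|_{L^2(\TT)}$ factors by Young and yields the $\epsilon$-dependent linear bound $\frac{d}{dt}\|\pax^2\reps\|_{L^2(\TT)}\le c(\epsilon,\rho_0)\|\pax^2\reps\|_{L^2(\TT)}$, which is what actually makes the approximations global. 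Replace your regularization by \eqref{casoangeleps} (or prove globality of the mollified system by hand via $\epsilon$-dependent smoothing estimates) and the rest of your argument goes through.
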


\begin{proof}
\textbf{The regularized system:} The regularized system that we are considering is
\begin{equation}\label{casoangeleps}
\left\{
\begin{array}{l}
\pat \reps=-\pax\left((\reps+\nu)H\reps\right)+\pax\left(\reps\pax v\right)+\epsilon\pax^2\reps,\\
\pax^2 \veps=\reps-\rh,\\
\reps(x,0)=\jeps\rho_0(x),
\end{array}
\right.
\end{equation}
where $\mathcal{J}_\epsilon$ defined as in \eqref{epsi}. Notice that 
$$
\|\reps(0)\|_{L^\infty(\TT)}\leq\|\rho_0\|_{L^\infty(\TT)},\|\reps(0)\|_{L^1(\TT)}=\|\rho_0\|_{L^1(\TT)},\|\reps(0)\|_{H^{1/2}(\TT)}\leq\|\rho_0\|_{H^{1/2}(\TT)}.
$$
Moreover $\reps(x,0)\in H^s(\TT)$ for any $s>0$.

\textbf{The \emph{a priori} bounds:} Since $\beta(\rho)$ defined by (\ref{cbeta3}) satisfies the hypothesis in Proposition \ref{max}, we get 
$$
\|\reps(t)\|_{L^\infty(\TT)}\leq \max\{\|\reps(0)\|_{L^\infty(\TT)},2\pi\|\reps(0)\|_{L^1(\TT)}\}\leq C(\rho_0).
$$
Moreover, we obtain
\begin{equation}\label{Lpbound}
\|\reps(t)\|_{L^p(\TT)}\leq C(\rho_0,p).
\end{equation}
We study the evolution of the $H^{1/2}$ seminorm:
\begin{multline*}
\frac{1}{2}\frac{d}{dt}\|\Lambda^{1/2}\reps(t)\|^2_{L^2(\TT)}=\int_\TT\Lambda\reps\pat\reps dx
\\=-\|\sqrt{\reps}\Lambda\reps\|_{L^2(\TT)}^2-\nu\|\Lambda\reps\|_{L^2(\TT)}^2
-\rh\|\Lambda^{1/2}\reps\|_{L^2(\TT)}^2-\epsilon\|\pax\reps\|_{L^2(\TT)}^2\\-\int_\TT\Lambda\reps \pax\reps H\reps dx+\int_\TT\Lambda\reps \reps^2+\int_\TT\Lambda\reps\pax\reps\pax \veps.
\end{multline*}
Using \eqref{tricomi2}, we have
\begin{multline*}
I_1=-\int_\TT\Lambda\reps \pax\reps H\left(\reps-\rh\right) dx=\int_\TT (\reps-\rh)H\left(H\pax\reps \pax\reps\right)dx\\
=\frac{1}{2}\int_\TT\left(\reps-\rh\right)\left(\Lambda\reps\right)^2dx-\frac{1}{2}\int_\TT\left(\reps-\rh\right)\left(\pax\reps\right)^2dx\\
=\frac{1}{2}\int_\TT\reps\left(\Lambda\reps\right)^2dx-\frac{1}{2}\int_\TT\reps\left(\pax\reps\right)^2dx,
\end{multline*}
where in the last step we use \eqref{calderon}. We consider $\delta>0$ a positive number that will be fixed below. Then, we obtain 
$$
I_2=\int_\TT\Lambda\reps \reps^2\leq \|\Lambda\reps\|_{L^2(\TT)}\|\reps\|_{L^4(\TT)}^2\leq \delta\|\Lambda\reps\|^2_{L^2(\TT)}+\frac{c(\rho_0)}{\delta}.
$$
Notice that, using the equation of $\veps$ and its periodicity, we have
$$
\pax\veps(x,t)-\pax\veps(-\pi,t)=\int_{-\pi}^x\reps(y)-\rh dy,
$$
and, integrating by parts,
\begin{multline*}
0=\int_\TT\pax\veps(y,t)dy=2\pi\pax\veps(-\pi,t)-\int_\TT \pax^2\veps(y,t) ydy\\
=2\pi\pax\veps(-\pi,t)-\int_\TT \reps(y,t) ydy.
\end{multline*}
>From these two equalities we obtain
$$
\|\pax\veps\|_{L^\infty(\TT)}\leq |\pax \veps(-\pi,t)|+\|\rho_0\|_{L^1(\TT)}\leq \frac{3}{2}\|\rho_0\|_{L^1(\TT)}.
$$
The last integral is, using again \eqref{calderon},
$$
I_3=\int_\TT\Lambda\reps\pax\reps\pax \veps dx\leq\frac{3}{2}\|\rho_0\|_{L^1(\TT)}\|\Lambda\reps\|_{L^2(\TT)}^2. 
$$
Collecting all the estimates, we get
\begin{eqnarray*}
\frac{1}{2}\frac{d}{dt}\|\Lambda^{1/2}\reps\|^2_{L^2(\TT)}&\leq& -\frac{1}{2}\|\sqrt{\reps}\Lambda\reps\|_{L^2(\TT)}^2-\frac{1}{2}\|\sqrt{\reps}\pax\reps\|_{L^2(\TT)}^2-\rh\|\Lambda^{1/2}\reps\|_{L^2(\TT)}^2\\&&-\epsilon\|\pax\reps\|_{L^2(\TT)}^2
+\left(\delta+\frac{3}{2}\|\rho_0\|_{L^1(\TT)}-\nu\right)\|\Lambda\reps\|^2_{L^2(\TT)}+\frac{c(\rho_0)}{\delta}\\
&\leq& -\rh\left(\|\Lambda^{1/2}\reps\|_{L^2(\TT)}^2-\frac{c(\rho_0)}{\rh\delta}\right),
\end{eqnarray*}
if $\delta$ is taken sufficiently small. Using Gronwall inequality, we obtain
\begin{multline}\label{H1/2}
\|\Lambda^{1/2}\reps(t)\|_{L^2(\TT)}^2\leq\frac{c(\rho_0)}{\rh\delta}+\left(\|\Lambda^{1/2}\reps(0)\|_{L^2(\TT)}^2-\frac{c(\rho_0)}{\rh\delta}\right)e^{-2\rh t}\\\leq \frac{c(\rho_0)}{\rh\delta}+\left(\|\Lambda^{1/2}\rho_0\|_{L^2(\TT)}^2-\frac{c(\rho_0)}{\rh\delta}\right)e^{-2\rh t}.
\end{multline}

\textbf{Existence:} We study the evolution of $\|\reps(t)\|_{H^2}.$ First we deal with the nonlocal flux. The diffusive term can be bounded using \eqref{calderon}, \eqref{calderon2} and \eqref{GN} in the usual way
\begin{multline*}
A_1=-\int\pax^2\reps\pax^2\left(\reps\Lambda\reps\right)dx=\int_\TT\pax^3\reps
\left(\pax\reps\Lambda\reps+\reps\Lambda\pax\reps\right)dx\\
\leq\|\pax^3\reps\|_{L^2(\TT)}\left(\|\pax \reps\|_{L^4(\TT)}\|H\pax \reps\|_{L^4(\TT)}+\|H\pax^2\reps\|_{L^2(\TT)}\|\reps\|_{L^\infty(\TT)}\right)\\
\leq \|\pax^3\reps\|_{L^2(\TT)}\|\pax^2\reps\|_{L^2(\TT)}C(\rho_0).
\end{multline*}
To handle the transport term with singular velocity we need the Kato-Ponce inequality (see \cite{grafakos2013kato, katoponce})
\begin{equation}\label{KatoPonce}
\|\Lambda^s\left(fg\right)\|_{L^r}\leq C\left(\|g\|_{L^{p_1}}\|\Lambda^s f\|_{L^{p_2}}+\|\Lambda^s g\|_{L^{q_1}}\|f\|_{L^{q_2}}\right),
\end{equation}
where $s>0$ and
$$
\frac{1}{r}=\frac{1}{p_1}+\frac{1}{p_2}=\frac{1}{q_1}+\frac{1}{q_2}.
$$
Using \eqref{KatoPonce} in the case $p_1=p_2=q_1=q_2=2$, $r=s=1$
\begin{eqnarray*}
A_2&=&-\int\pax^2\reps\pax^2\left(\pax\reps H\reps\right)dx=\int_\TT\pax^3\reps
\left(\pax^2\reps H\reps+\pax\reps\Lambda\reps\right)dx\\
&\leq&-\frac{1}{2}\int_\TT\Lambda\left(\pax^2\reps\right)^2\reps dx+C(\rho_0)\|\pax^3\reps\|_{L^2(\TT)}\|\pax^2\reps\|_{L^2(\TT)}\\
&\leq& C(\rho_0) \|\pax^3\reps\|_{L^2(\TT)}\|\pax^2\reps\|_{L^2(\TT)}.
\end{eqnarray*}
The aggregation terms are:
\begin{eqnarray*}
A_3&=&\int\pax^2\reps\pax^2\left(\pax\reps\pax\veps\right)dx=-\int\pax^3\reps\left(\pax^ 2\reps\pax\veps+\pax\reps\left(\reps-\rh\right)\right)dx\\
&\leq& C(\rho_0)\|\pax^2\reps\|_{L^2(\TT)}\left(\|\pax^2\reps\|_{L^2(\TT)}+\|\pax^3\reps\|_{L^2(\TT)}\right),
\end{eqnarray*}
and
$$
A_4=\int\pax^2\reps\pax^2\left(\reps\left(\reps-\rh\right)\right)dx\leq C(\rho_0)\|\pax^2\reps\|_{L^2(\TT)}^2.
$$
Thus, using Young inequality, we obtain
$$
\frac{d}{dt}\|\pax^2\reps\|_{L^2(\TT)}\leq c(\epsilon,\rho_0)\|\pax^2\reps\|_{L^2(\TT)},
$$
and, using Gronwall inequality,
$$
\|\pax^2\reps(t)\|_{L^2(\TT)}\leq c_1(\epsilon,\rho_0)e^{c(\epsilon,\rho_0)t}.
$$
Since we have \eqref{Lpbound}, we have
$$
\|\reps(t)\|_{H^2(\TT)}\leq c(\epsilon,\rho_0,T),\;\;\forall\; T<\infty.
$$
With this estimate and following the classical technique, we obtain 
$$
\reps(x,t)\in C([0,\infty),H^2(\TT)),\;\;\forall\;\epsilon>0.
$$ 
\textbf{Compactness:} This step uses classical tools from functional analysis. Let $T>0$ be an arbitrary but finite final time. The estimate \eqref{Lpbound} gives us
$$
\sup_{t\in[0,T]}\|\reps(t)\|_{L^p(\TT)}\leq C(\rho_0,p),\;\forall\; 1\leq p\leq\infty.
$$
Thus, the family of approximate solutions remains uniformly bounded in the Bochner space $L^\infty\left([0,T],L^p(\TT)\right)$, $\forall\; 1\leq p\leq\infty$. Using \eqref{H1/2}, we have
$$
\sup_{t\in[0,T]}\|\reps(t)\|_{H^{1/2}(\TT)}\leq C(\rho_0).
$$
Using this two estimates, we get
\begin{equation}\label{Lptiempo}
\reps(t)\in L^p\left([0,T],H^{1/2}(\TT)\right)\cap L^p\left([0,T],L^{\infty}(\TT)\right), \;\;\forall\; 1\leq p\leq\infty.
\end{equation}
In particular, $\reps$ is uniformly bounded in the space $L^2\left([0,T],H^{1/2}(\TT)\right)$. Using the Banach-Alaoglu Theorem we obtain (picking a subsequence) the existence of $\rho$ such that
\begin{equation}\label{L2L2}
\reps(x,t)\rightharpoonup \rho(x,t)\in L^2\left([0,T],H^{1/2}(\TT)\right),
\end{equation} 
and, using \eqref{H1/2},
$$
\rho(x,t)\in L^\infty\left([0,T],H^{1/2}(\TT)\right).
$$
Using \eqref{Lpbound} and picking a subsequence if needed, we obtain 
$$
\reps(x,t)\stackrel{*}{\rightharpoonup}\rho(x,t)\in L^\infty\left([0,T],L^\infty(\TT)\right).
$$
We need to obtain some bound in $\pat \reps$ to obtain the compacity in some Bochner space. Given $f\in L^2(\TT)$, we take into account the norm
$$
\|f\|_{H^{-1}(\TT)}=\sup_{\substack{\psi\in H^1(\TT), \\ \|\psi\|_{H^1(\TT)}\leq 1}}\left|\int_{\TT}\psi(x)f(x)dx\right|.
$$
We consider the Banach space $H^{-1}(\TT)$ as the completion of $L^2(\TT)$ with this norm. We multiply the equation \eqref{casoangeleps} by $\psi\in H^1(\TT)$ and integrate to get
$$
\left|\int_\TT\pat\reps(x,t)\psi(x)dx\right|=\left|\int_\TT\left(\left(\reps+\nu\right) H\reps-\reps \pax\veps\right)\pax\psi dx\right|\leq C(\rho_0)\;\;\forall \psi\in H^1(\TT).
$$
Thus, we have
$$
\sup_{t\in[0,T]}\|\pat\reps(t)\|_{H^{-1}(\TT)}\leq C(\rho_0),
$$
and conclude 
$$
\pat \reps\in L^p\left([0,T],H^{-1}(\TT)\right)\;\;\forall\; 1\leq p \leq \infty.
$$
We use the classical Aubin-Lions Lemma to obtain compactness (see Corollary 4, Section 8 in \cite{simon1986compact}). Let us restate this result: given three spaces $X\subset B\subset Y$, such that the embedding $X\subset B$ is compact and the embedding $B\in Y$ is continuous, we consider a sequence $f_n$ satisfying 
\begin{enumerate}
\item $f_n$ is uniformly bounded in $L^\infty([0,T],X)$,
\item $\pat f_n$ is uniformly bounded in $L^r([0,T],Y)$ where $r>1$.
\end{enumerate}
Then this sequence is relatively compact in $C([0,T],B)$. Thus, we take $X=H^{1/2}(\TT), B= L^2(\TT)$ and $Y=H^{-1}(\TT)$ and with this strong convergence, we get 
$$
\sup_{t\in[0,T]}\|H\reps(t)-H\rho(t)\|_{L^2(\TT)}\leq \sup_{t\in[0,T]}\|\reps(t)-\rho(t)\|_{L^2(\TT)}\rightarrow 0
$$
Thus, picking a subsequence, $\reps(x,t)\rightarrow \rho(x,t)$ and $H\reps(x,t)\rightarrow H\rho(x,t)$ almost everywhere.

\begin{figure}[t]
		\begin{center}
		\includegraphics[scale=0.45]{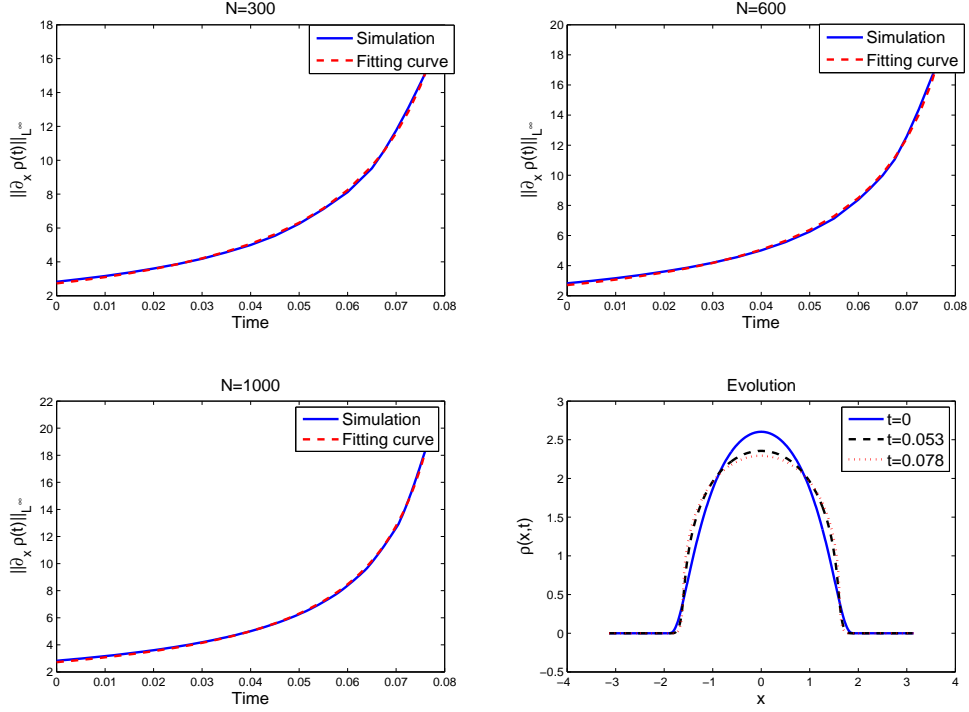} 
		\end{center}
		\caption{Case 1. $\beta(\rho)=\rho^2$.}		
		\label{Case1}
\end{figure}

Now, we need to pass to the limit in the weak formulation
\begin{multline*}
\int_0^T\int_\TT\pat\phi(x,t)\reps(x,t)dx dt+\int_\TT\reps(x,0)\phi(x,0)dx\\
=\int_0^T\int_\TT \pax\phi(x,t)\left[-(\reps+\nu)H\reps+\reps\pax \veps\right]dx dt-\epsilon\int_\TT\int_\TT\reps(x,t)\pax^ 2\phi(x,t)dxdt.
\end{multline*}
Using \eqref{Lpbound} and \eqref{L2L2}, we obtain the convergence of the linear terms. The nonlinear terms can be handled as follows
\begin{multline*}
\left|\int_0^T\int_\TT\pax\phi(x,t)\left(\reps H\reps-\rho H\reps+\rho H\reps-\rho H\rho\right)dxdt\right|\\
\leq T\|\pax \phi\|_{L^\infty(\TT\times[0,T])}\sup_{t\in[0,T]}\|\reps(t)-\rho(t)\|_{L^2(\TT)}\sup_{t\in[0,T]}(\|H\reps(t)\|_{L^2(\TT)}+
\|\reps(t)\|_{L^2(\TT)})\\
\leq C(\phi,\rho_0,T)\sup_{t\in[0,T]}\|\reps(t)-\rho(t)\|_{L^2(\TT)}\rightarrow0,
\end{multline*}
as $\epsilon\to 0$. Now, since $v$ is defined by \eqref{eq2} (we can since $\rho(t)\in L^2(\TT)$ for all times),
\begin{multline*}
\left|\int_0^T\int_\TT\pax\phi(x,t)\left(\reps \pax\veps-\rho \pax\veps+\rho \pax \veps-\rho \pax v\right)dxdt\right|\\
\leq T\|\pax \phi\|_{L^\infty(\TT\times[0,T])}\left(\sup_{t\in[0,T]}\|\reps(t)-\rho(t)\|_{L^2(\TT)}\sup_{t\in[0,T]}\|\pax\veps(t)\|_{L^2(\TT)}\right.\\
\qquad\qquad\qquad\left.+\sup_{t\in[0,T]}\|\pax\veps(x,t)-\pax v(x,t)\|_{L^2(\TT)}\sup_{t\in[0,T]}\|\rho(t)\|_{L^2(\TT)}\right)\\
\leq C(\phi,\rho_0,T)\sup_{t\in[0,T]}\|\reps(t)-\rho(t)\|_{L^2(\TT)}\rightarrow0.
\end{multline*}
We conclude the proof.
\end{proof}

\section{Numerical simulations}\label{sec6}
To better understand the role of $\beta(\rho)$, we perform some numerical simulations. We denote $N$ the number of spatial grid points and we approximate our solution by a cubic spline passing through these nodes. Then, we compute (using the function \texttt{quadl} in Matlab) the Hilbert transform using Taylor series and the cancellation coming from the principal value integration to avoid the singularity of the integral. Once that we compute $H\rho$, multiplying by $\beta(\rho)$ and taking the derivative, we have the nonlocal flux. The Poisson equation is solved using finite differences. This ends with the spatial part in a straightforward way. We advance in time with the Runge-Kutta-Fehlberg-45 scheme with tolerance $10^{-8}$.

We consider the same initial data
$$
\rho(x,0)=\left(\frac{\int_{-\pi}^\pi e^{-\frac{1}{1-(s/2)^2}}ds}{2\pi}\right)^{-1}e^{-\frac{1}{1-(x/2)^2}}
$$
in all simulations. In the first case we take $\beta(\rho)$ a convex function, while in the second simulation we consider a concave one. 

\begin{figure}[t]
		\begin{center}
		\includegraphics[scale=0.45]{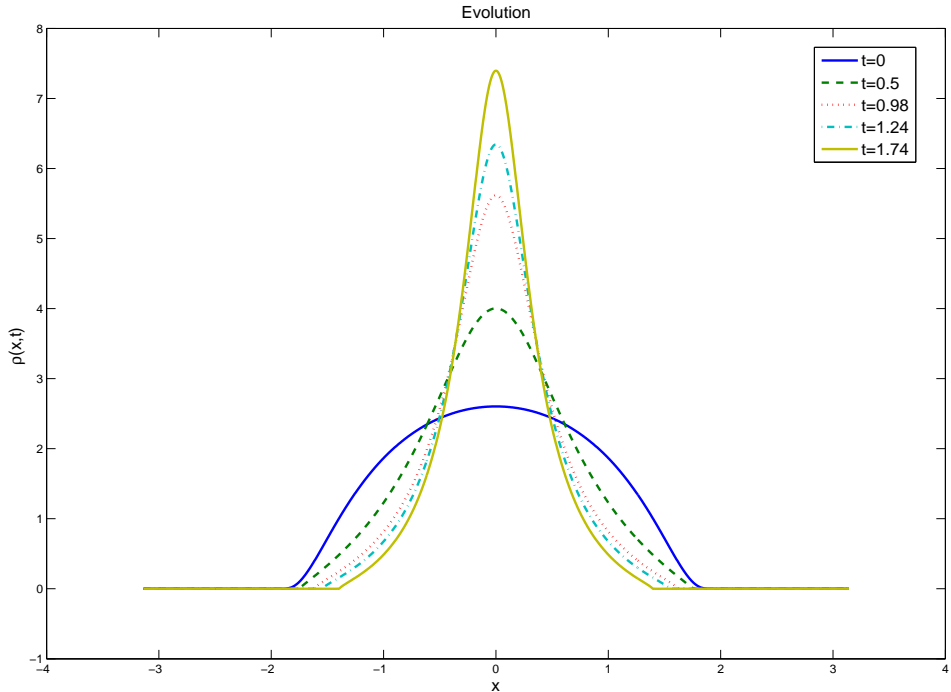} 
		\end{center}
		\caption{Case 2. $\beta(\rho)=log(1+\rho)$.}		
		\label{Case2}
\end{figure}

\textbf{Case 1:} We consider $\beta(\rho)=\rho^2$. The results are contained in Figure \ref{Case1}. Notice that the first derivative appears to blow up even if we refine $N$. We conjecture that $\|\pax \rho(t)\|_{L^\infty(\TT)}$ behaves like 
$$
\frac{C}{\left(T-t\right)^a}.
$$
Using least squares, we approximate these parameters for different values of $N$, in particular, $N=300$, 600, 1000, to get
$$
C=0.147126,\; T=0.093494,\; a=1.191234.
$$
With these constants we believe that the blow up occurs.

\textbf{Case 2:} We consider $\beta(\rho)=\log(1+\rho)$. Now the diffusion can not prevent that $\|\rho(t)\|_{L^\infty(\TT)}$ grows (even if we know that it is uniformly bounded for all times) and we obtain a very different profile (see Figure \ref{Case2}). Here, even if the $\|\rho(t)\|_{C^2}$ increases, there is no evidence of finite time blow up.

\end{document}